\documentclass[11pt]{article}
\usepackage[latin1]{inputenc}
\usepackage{epsfig}
\usepackage{color}
\usepackage[british,english]{babel}
\usepackage{amsthm}
\usepackage{amsmath}
\usepackage{amsfonts}
\usepackage{amssymb}
\usepackage{graphicx}
\setlength{\topmargin}{-0.0in} \setlength{\oddsidemargin}{-0.0in}
\setlength{\textheight}{8.75in} \setlength{\textwidth}{5.75in}

\parskip 1.2ex plus 0.5ex minus 0.5ex
\newtheorem{corollary}{Corollary}[section]
\newtheorem{defi}[corollary]{Definition}

\newtheorem{lemma}[corollary]{Lemma}
\newtheorem{prp}[corollary]{Proposition}
\newtheorem{remark}[corollary]{Remark}
\newtheorem{thm}[corollary]{Theorem}
\newfont{\sBlackboard}{msbm10 scaled 900}

\newcommand{\mylabel}[1]{\label{#1}
            \ifx\undefined\stillediting
            \else \fbox{$#1$}\fi }
\newcommand{\BE}{\begin{equation}}

\newcommand{\EEQ}{\end{equation}}
\newcommand{\rfb}[1]{\mbox{\rm
   (\ref{#1})}\ifx\undefined\stillediting\else:\fbox{$#1$}\fi}

\newfont{\Blackboard}{msbm10 scaled 1200}

\newfont{\roma}{cmr10 scaled 1200}

\def\CC{\rm \hbox{C\kern-.56em\raise.4ex
         \hbox{$\scriptscriptstyle |$}\kern+0.5 em }}


%

%

%
\newcommand{\mm}    {{\hbox{\hskip 0.5pt}}}

\newcommand{\bluff} {{\hbox{\raise 15pt \hbox{\mm}}}}
%

%

%
 
scaled\magstep2

%

%


%
%
\makeatletter
\def\section{\@startsection {section}{1}{\z@}{-3.5ex plus -1ex minus
    -.2ex}{2.3ex plus .2ex}{\large\bf}}
\makeatother
%
\def\be{\begin{equation}}
\def\ee{\end{equation}}

\title{\textbf { Weak controllability of second order evolution systems and applications}}
\author{Rachid Attia\footnote{Département de
Mathématiques (ACEDP: 05/UR/15-01), Faculté des Sciences de
Monastir, Université de Monastir, 5019 Monastir, Tunisie, e-mail:
rachid.fsm@gmail.com,} \; and \;   Akram Ben
Aissa\footnote{Département de Mathématiques (ACEDP: 05/UR/15-01),
Faculté des Sciences de Monastir, Université de Monastir, 5019
Monastir, Tunisie, e-mail: akram.benaissa@fsm.rnu.tn}}


\date{}
\begin{document}
\maketitle \noindent {\bf Abstract.} {\small Controllability and
observability are important properties of a distributed paramater
systems.The equivalence between the notion of exact observability
and exact controllability  holds in general.
  In this work, we define a new notion of controllability say weak which is related
  to some weak observability inequality and we give the equivalence between. }
\\
\\
\noindent {\bf Keywords}: {\small weak observability, weak
controllability.}

\textbf{2010 Mathematics Subject Classification
,}\;93B07,\,93B05,\,93C20,\,35A15.
\section{Introduction}
Problems of control and observations of waves arises in many
different context and for various models. Hence controllability
refers to the possibility of driving the system under consideration
to prescribed final state at a given final time using a control
function. This question is very interesting when the control
function doesn't act everywhere but is rather located in some part
of the domain or in its boundary through suitable actuators.\\
On the other hand, observability refers to the possibility of
measuring the whole energy of the solutions of the free trajectories
(i.e., without control) through partial measurements. It turns out
that these two properties are equivalent and dual one from another.
This is the basis of the so-called Hilbert Uniqueness method
\cite{Lions}.\\

 \vskip 0.2cm Our starting point is the following.
Let $\Omega\subset \mathbb{R}^n,\,n\geq 2$, be an open bounded
domain with a sufficiently smooth boundary
$\partial\Omega=\overline{\Gamma_0}\cup\overline{\Gamma_1}$,such
that  $\Gamma_0,\,\Gamma_1$ are disjoints parts of the boundary
relatively
open in $\partial\Omega,\,\text{int}(\Gamma_0)\neq\emptyset$.\\
 We consider  the following homogenous wave equation
\begin{equation}\label{wee1}
\frac{\partial^2\phi}{\partial t^2}-\Delta\phi=0,\quad\Omega\times
(0,+\infty),
\end{equation}
\begin{equation}\label{wee2}
\phi=0,\quad\partial\Omega\times (0,+\infty),
\end{equation}
\begin{equation}\label{wee3}
\phi(x,0)=\phi^0(x),\,\frac{\partial{\phi}}{\partial
t}(x,0)=\phi^1(x),
\end{equation}
then, using theorem of Hille-Yoshida, one can easily check that
problem (\ref{wee1})-(\ref{wee3}) is well-posed, i.e., for all
$(\phi^0,\phi^1)\in H^1_0(\Omega)\times L^2(\Omega)$, equations
(\ref{wee1})-(\ref{wee2}) admits unique solution
\begin{equation}\label{reee}
\phi\in C([0,+\infty); H^1_0(\Omega))\cap
C^1([0,+\infty);L^2(\Omega)).
\end{equation}

It's well known that the problem of controllability, that's., there
exists a constant $C_0>0$ such that for all $(z^0,z^1)\in
L^2(\Omega)\times H^{-1}(\Omega)$ there exist a control $g\in
L^2([0,T],L^2(\Gamma))$ such that
\begin{equation}\label{css}
\|g\|_{L^2([0,T],L^2(\Gamma_0))}\leq
C_0(\|z^0\|_{L^2(\Omega)}+\|z^1\|_{H^{-1}(\Omega)})
\end{equation}
such that the solution of
\begin{equation}\label{weee1}
\frac{\partial^2z}{\partial t^2}-\Delta z=0,\quad\Omega\times
(0,+\infty),
\end{equation}
\begin{equation}\label{weee2}
z=g,\quad\Gamma_0\times (0,+\infty),
\end{equation}
\begin{equation}\label{weee3}
z=0,\quad\Gamma_1\times (0,+\infty),
\end{equation}
\begin{equation}\label{weee4}
z(x,0)=z^0(x),\;\frac{\partial z}{\partial t}(x,0)=z^1(x),\quad
\Omega,
\end{equation} satisfy
\begin{equation}\label{weee5}
z(x,t)=0,\quad\forall t\geq T.
\end{equation}
is equivalent to the following observability inequality
\begin{equation}\label{eoi}
\int_0^T\int_{\Gamma_0}\left|\frac{\partial\phi}{\partial\nu}\right|^2d\Gamma_0dt\geq
C \|(\phi^0,\phi^1)\|^2_{H^1_0(\Omega)\times (L^2(\Omega)}.
\end{equation}Due to \cite{BLR},\cite{BU} this last inequality is
equivalent to some geometric  conditions\footnote{each ray which
propagates in $\Omega$ and is reflected on $\Gamma_0$ according to
the laws of geometric optics has to meet  $\Gamma_0$ in time less
than $T$.}(CGC)\cite{BLR}  satisfied by the part of the boundary
$\Gamma_0$ and the
time of control $T>0$.\\
\vskip 0.3cm
 As a first example of this paper,  we consider $\Omega=(0,1)\times (0,1)$ and we prove that the solution of the homogenous system
(\ref{wee1})-(\ref{wee3}) satisfy  the following weakly
observability inequality. For the proof, see appendix.
\begin{prp}\label{wec}\cite{AA}
Let $\Gamma_0=\{(0,x_2);\,x_2\in(0,1)\}=\{0\}\times (0,1)$.
There exist $T_0>0,\,C_{T_0}>0$ such that for all $T>T_0$ and for
all $(\phi_0,\phi_1)\in H^1_0(\Omega)\times L^2(\Omega)$ we have
\begin{equation}\label{woi}
\|(\phi_0,\phi_1)\|^2_{L^2(\Omega)\times H^{-1}(\Omega)}\leq
C_{T_0}\int_0^T\int_{\Gamma_0}|\partial_\nu\phi(x,t)|^2d\Gamma_0(x)dt.
\end{equation}
 \end{prp}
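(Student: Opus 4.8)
The natural route is separation of variables, which turns (\ref{woi}) into a family of one--dimensional moment problems that I would attack with Ingham's inequality. Since $\Omega=(0,1)\times(0,1)$ carries Dirichlet data, the functions $e_{jk}(x)=2\sin(j\pi x_1)\sin(k\pi x_2)$, $j,k\ge 1$, form an orthonormal basis of $L^2(\Omega)$ of eigenfunctions of $-\Delta$ with eigenvalues $\pi^2(j^2+k^2)$. Writing $\phi_0=\sum a_{jk}e_{jk}$, $\phi_1=\sum b_{jk}e_{jk}$ and $\mu_{jk}=\pi\sqrt{j^2+k^2}$, the solution of (\ref{wee1})--(\ref{wee3}) is
\[ \phi(x,t)=\sum_{j,k\ge1}\Big(a_{jk}\cos\mu_{jk}t+\frac{b_{jk}}{\mu_{jk}}\sin\mu_{jk}t\Big)e_{jk}(x), \]
so the left-hand side of (\ref{woi}) equals exactly $\sum_{j,k}|a_{jk}|^2+\sum_{j,k}\frac{|b_{jk}|^2}{\pi^2(j^2+k^2)}$.

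First I would compute the observation term explicitly. On $\Gamma_0=\{0\}\times(0,1)$ one has $\partial_\nu\phi=-\partial_{x_1}\phi|_{x_1=0}$ and $\partial_{x_1}e_{jk}|_{x_1=0}=2j\pi\sin(k\pi x_2)$, hence
\[ \partial_\nu\phi(x_2,t)=-2\pi\sum_{k\ge1}c_k(t)\sin(k\pi x_2),\qquad c_k(t)=\sum_{j\ge1}j\Big(a_{jk}\cos\mu_{jk}t+\frac{b_{jk}}{\mu_{jk}}\sin\mu_{jk}t\Big). \]
Using the orthogonality of $\{\sin(k\pi x_2)\}$ on $(0,1)$, the right-hand side of (\ref{woi}) collapses to $2\pi^2\sum_{k\ge1}\int_0^T|c_k(t)|^2\,dt$. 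Thus (\ref{woi}) is equivalent to a lower bound, summable in $k$, for each scalar trigonometric sum $c_k$. The key step is then to bound $\int_0^T|c_k(t)|^2\,dt$ from below: for each fixed $k$ the frequencies $\{\mu_{jk}\}_{j\ge1}$ are strictly increasing, so I would invoke Ingham's inequality, which (once $T$ exceeds $2\pi$ over the spectral gap) makes $\int_0^T|c_k|^2\,dt$ dominate $\sum_j j^2\big(|a_{jk}|^2+\frac{|b_{jk}|^2}{\mu_{jk}^2}\big)$. Since $j^2\ge1$, a uniform-in-$k$ version of this bound, summed over $k$, controls precisely the left-hand side of (\ref{woi}), and the proof would close with $T_0$ the Ingham threshold.

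The main obstacle is exactly the uniformity of this lower bound in $k$. The first gap $\mu_{2k}-\mu_{1k}=\pi(\sqrt{4+k^2}-\sqrt{1+k^2})$ decays like $3\pi/(2k)$, so the Ingham threshold $2\pi/(\mu_{2k}-\mu_{1k})$ grows linearly in $k$ and no single $T$ serves all $k$ at once. This is the spectral signature of the nearly transverse modes $j\ll k$ — the rays that bounce between the top and bottom edges and never meet $\Gamma_0$, violating the geometric control condition for a single edge. Reconciling this degeneration with one fixed time $T_0$ is precisely the role of the weak norm on the left of (\ref{woi}): weakening $\|(\phi_0,\phi_1)\|$ to $L^2(\Omega)\times H^{-1}(\Omega)$ is meant to offset the loss in the Ingham constant on the small-gap modes. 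I would therefore replace the plain Ingham inequality by a weighted, generalized Ingham-type estimate (in the spirit of Haraux and of Komornik--Loreti), in which the constant is allowed to degenerate with the gap but is reabsorbed by the negative-order weight, so that the summation over $k$ stays finite. Making this compensation quantitative, and thereby fixing the admissible $T_0$, is where the real work of the proof lies.
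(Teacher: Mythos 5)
Your reduction coincides with the paper's own proof: the appendix likewise expands the solution in the basis $\sin(k_1\pi x_1)\sin(k_2\pi x_2)$, uses orthogonality of $\{\sin(k_2\pi x_2)\}$ to collapse the observation into $\sum_{k_2}\int_0^T\bigl|\sum_{k_1}k_1(\cdots)\bigr|^2dt$, and then confronts exactly the uniformity-in-$k_2$ problem you isolate. The difference lies in how that problem is handled: the paper disposes of it in one line by citing \cite[Theorem 1]{MM} with $d=N=2$, $p_l=l$, $\gamma_1=\gamma_2=\pi/(2\sqrt{2})$ (which is where $T_0=8$ comes from), whereas you postulate a weighted Ingham-type inequality whose constant degenerates with the gap but is ``reabsorbed by the negative-order weight'', and you explicitly defer its proof. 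That deferred estimate is not a technical remainder --- it is the entire content of the proposition --- and the genuine gap in your proposal is that no such estimate can exist: the compensation you describe is quantitatively impossible.

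To see this, fix $T$, let $k\to\infty$, and consider the two-mode solution
\begin{equation*}
\phi(x,t)=\cos(\mu_{1k}t)\sin(\pi x_1)\sin(k\pi x_2)-\tfrac12\cos(\mu_{2k}t)\sin(2\pi x_1)\sin(k\pi x_2),
\end{equation*}
whose coefficients are tuned so that the factors $j=1,2$ produced by $\partial_{x_1}$ cancel on $\Gamma_0$:
\begin{equation*}
\partial_\nu\phi\big|_{\Gamma_0}=-\pi\bigl(\cos(\mu_{1k}t)-\cos(\mu_{2k}t)\bigr)\sin(k\pi x_2),
\qquad
\bigl|\cos(\mu_{1k}t)-\cos(\mu_{2k}t)\bigr|\le(\mu_{2k}-\mu_{1k})\,t\le\tfrac{3\pi t}{2k}.
\end{equation*}
Hence the right-hand side of (\ref{woi}) is $O(T^3/k^2)$, while the left-hand side equals $\|\phi_0\|^2_{L^2}=5/16$ for every $k$ (here $\phi_1=0$); so (\ref{woi}) fails for $k$ large. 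The structural reason is the one your own gap computation points to: the one-derivative weakening of the norm is entirely spent making each \emph{individual} mode $(j,k)$, $j=O(1)$, visible (its trace carries only the fraction $j^2/\mu_{jk}^2$ of its energy), and nothing is left to pay for \emph{cancellations} between modes whose frequencies are $O(1/k)$ apart; each additional nearly resonant mode costs a further factor $k^2$ (annihilating the first $M-1$ Taylor moments of $M$ modes makes the observation $O_{M,T}(k^{-2(M-1)})$ against a weak norm of order one), so no fixed number of derivatives helps. This also rules out the tools you invoke: Haraux's theorem applies for each fixed $k$ (the asymptotic gap is $\pi$) but with a constant that must degenerate faster than any power of $k$, and Komornik--Loreti/Baiocchi-type weak-gap theorems require cluster sizes bounded uniformly, whereas here $\asymp\sqrt{k}$ of the frequencies $\mu_{jk}$ lie within distance $1$ of $\pi k$. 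The only estimates compatible with these near-resonances are of the logarithmic type recalled in (\ref{lio}). Note, finally, that this is a defect of the statement and not only of your attempt: the same two-mode computation contradicts (\ref{woi}) itself, hence also the paper's one-line application of \cite[Theorem 1]{MM}. Your suspicion about uniformity in $k$, pushed one step further, refutes the proposition rather than proving it.
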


\newpage
 We prove that.
\begin{thm}\label{rp}
System (\ref{wee1})-(\ref{wee3}) is weakly observable in time $T>0$,
that's (\ref{woi}) holds true if and only if
 there exists a control $g\in L^2(0,T;L^2(\Gamma_0))$ such that
\begin{equation}\label{cost}
\|g\|_{L^2(0,T;L^2(\Gamma_0))}\leq
C_0(\|\phi^0\|_{H^1_0(\Omega)}+\|\phi^1\|_{L^2(\Omega)}),
\end{equation}
 and that the solution of
\begin{eqnarray}
\frac{\partial^2z}{\partial t^2}(x,t)-\Delta z(x,t)=0,&&
(x,t)\in\Omega\times (0,+\infty),\label{w1}
\\
z(x,t)=g,&& (x,t)\in \Gamma_0\times (0,+\infty),\label{w2}
\\
z(x,t)=0,&&(x,t)\in\partial\Omega\backslash\Gamma_0\times
(0,+\infty),\label{w3}
\\
z(x,0)=z^0(x),&& x\in\Omega.\label{w4}
\\
\frac{\partial z}{\partial t}(x,0)=z^1(x),&& x\in\Omega.\label{w5}
\end{eqnarray}
 satisfy
\begin{equation}\label{nul}
z(x,t)=0,\quad\forall t\geq T.
\end{equation}
\end{thm}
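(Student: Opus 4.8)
The plan is to establish the equivalence by the Hilbert Uniqueness Method (HUM), realizing weak controllability and weak observability as dual statements through the Riesz--Lax--Milgram theorem. Throughout, the controlled problem (\ref{w1})--(\ref{w5}) with boundary data $g\in L^2(0,T;L^2(\Gamma_0))$ is to be understood in the sense of transposition (Lions--Magenes), which renders the control-to-state map continuous and the terminal pair $(z(\cdot,T),z_t(\cdot,T))$ well defined in $L^2(\Omega)\times H^{-1}(\Omega)$; this is the natural framework, since Dirichlet boundary control with merely $L^2$ data produces low-regularity states. The bridge between the two problems is the fundamental HUM identity, obtained by multiplying the homogeneous equation (\ref{wee1}) by the controlled solution $z$, integrating over $\Omega\times(0,T)$, and integrating by parts using (\ref{wee2}), the boundary conditions (\ref{w2})--(\ref{w3}), and the terminal condition (\ref{nul}). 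This yields a relation of the form
\begin{equation}
\langle (z^0,z^1),(\phi^0,\phi^1)\rangle=\int_0^T\int_{\Gamma_0} g\,\partial_\nu\phi\,d\Gamma_0\,dt,
\end{equation}
where the left-hand bracket is the duality pairing between $L^2(\Omega)\times H^{-1}(\Omega)$ and $H^1_0(\Omega)\times L^2(\Omega)$ (up to the conventional reordering of components), and $\phi$ solves the adjoint homogeneous system (\ref{wee1})--(\ref{wee3}). This identity is the engine of both implications.

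For the direction weak observability $\Rightarrow$ weak controllability, I would define on the adjoint data the bilinear form $a\big((\phi^0,\phi^1),(\psi^0,\psi^1)\big)=\int_0^T\int_{\Gamma_0}\partial_\nu\phi\,\partial_\nu\psi\,d\Gamma_0\,dt$, where $\phi,\psi$ solve the homogeneous problem with the respective data. Continuity of $a$ follows from the direct (hidden-regularity) trace estimate $\|\partial_\nu\phi\|_{L^2(\Gamma_0\times(0,T))}\le C\|(\phi^0,\phi^1)\|_{H^1_0\times L^2}$, while coercivity is precisely the content of the weak observability inequality (\ref{woi}), which bounds the $L^2(\Omega)\times H^{-1}(\Omega)$ norm of the data from below. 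Completing the data space in the observation seminorm and applying Lax--Milgram, the HUM operator $\Lambda(\phi^0,\phi^1)=(z^1,-z^0)$ --- where $z$ solves the backward problem driven by $g=\partial_\nu\phi$ --- is an isomorphism; solving $\Lambda(\phi^0,\phi^1)=(z^1,-z^0)$ for the prescribed target data furnishes the steering control $g=\partial_\nu\phi$, and the continuity of $\Lambda^{-1}$ delivers the cost bound (\ref{cost}).

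For the converse weak controllability $\Rightarrow$ weak observability, I would run the HUM identity in the opposite direction: given a control $g$ satisfying the cost estimate and the terminal condition (\ref{nul}), Cauchy--Schwarz applied to the right-hand side gives $|\langle(z^0,z^1),(\phi^0,\phi^1)\rangle|\le \|g\|_{L^2(0,T;L^2(\Gamma_0))}\,\|\partial_\nu\phi\|_{L^2(\Gamma_0\times(0,T))}$, and the cost bound replaces $\|g\|$ by $C_0\|(z^0,z^1)\|$. Choosing $(z^0,z^1)$ to be the Riesz representative that realizes the pairing on the left as $\|(\phi^0,\phi^1)\|^2_{L^2\times H^{-1}}$ then isolates (\ref{woi}) after cancelling the common factor.

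The main obstacle I anticipate is not the formal multiplier computation but the rigorous bookkeeping of the duality pairings across the gap between the energy space $H^1_0(\Omega)\times L^2(\Omega)$ and the weaker space $L^2(\Omega)\times H^{-1}(\Omega)$: one must justify the transposition solution, verify that the HUM identity holds with all boundary and terminal terms meaningful in these low-regularity spaces, and confirm that the completion of the data space under the observation seminorm is exactly the predual in which (\ref{woi}) supplies coercivity. The compatibility of the direct trace estimate with the weak observability norm --- so that $a$ is simultaneously continuous on the energy space and coercive in the weak norm --- is the delicate point on which the whole equivalence turns.
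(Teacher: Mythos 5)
Your proposal is correct. The forward implication (weak observability $\Rightarrow$ weak controllability) follows essentially the same route as the paper: complete $H^1_0(\Omega)\times L^2(\Omega)$ under the observation norm to get a space $F$, observe that the HUM bilinear form is automatically coercive there, invert the HUM operator $\Lambda$ by Lax--Milgram, take $g=\mp\partial_\nu\phi$, and read the cost bound (\ref{cost}) off the continuity of $\Lambda^{-1}$. The role you assign to (\ref{woi}) is the right one: its real content is that the completion $F$ embeds continuously in $L^2(\Omega)\times H^{-1}(\Omega)$, so that targets coming from $(z^0,z^1)\in H^1_0(\Omega)\times L^2(\Omega)$ lie in $F'$; coercivity on $F$ itself is automatic, exactly as in the paper's lemma.

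Where you genuinely diverge is the converse, and your version is the stronger one. The paper's converse merely re-invokes the isomorphism lemma and the embedding $F\subset L^2(\Omega)\times H^{-1}(\Omega)$ --- but that embedding was \emph{postulated} in the definition of $F$ and is itself equivalent to (\ref{woi}), so the controllability hypothesis is never actually used and that direction of the paper's argument is essentially circular. Your converse is the genuine duality argument: insert the steering control into the transposition identity, apply Cauchy--Schwarz and the cost bound, and test against the Riesz representative (up to ordering, $(z^0,z^1)=\bigl(-(-\Delta)^{-1}\phi^1,\phi^0\bigr)$), so that the pairing equals $\|(\phi^0,\phi^1)\|^2_{L^2\times H^{-1}}$ while $\|(z^0,z^1)\|_{H^1_0\times L^2}=\|(\phi^0,\phi^1)\|_{L^2\times H^{-1}}$; cancelling the common factor yields (\ref{woi}). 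This makes the quantifier structure of weak controllability (a control for \emph{every} initial pair in $H^1_0(\Omega)\times L^2(\Omega)$, with uniform constant) do real work, which the paper's proof does not. One caveat: the estimate (\ref{cost}) as printed bounds $\|g\|$ by norms of $(\phi^0,\phi^1)$; your reading of it as a bound by $\|(z^0,z^1)\|_{H^1_0(\Omega)\times L^2(\Omega)}$ is the one consistent with the paper's own forward proof, and it is what your cancellation step requires.
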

\begin{defi}
With $g$ and $z$ as above satisfy respectively (\ref{cost}) and
(\ref{nul}), we said that the system (\ref{weee1})-(\ref{weee4}) is
weakly controllable in time $T>0$.
\end{defi}
The outline of this paper is as follows. In the second section we
give some background on HUM method needed here, section $3$ contains
the proof of the main result and abstract framework. The last
section is devoted to some applications.
\section{Survey on
HUM method} Before starting the proof of Theorem \ref{rp}, we recall
the HUM method in the classical case. For more details, see
\cite{Lions}.\\
In fact, J. L. Lions gave a systematic method to reduce the study of
exact controllability problem of system (\ref{weee1})-(\ref{weee4})
to obtain some inequality, say observability inequality or inverse
inequality of the adjoint problem (\ref{wee1})-(\ref{wee3}). He
called this method Hilbert Uniqueness method which can be found in
\cite{Lions} and we briefly describe below.\\
 Let $(\phi^0,\phi^1)\in H^1_0(\Omega)\times
L^2(\Omega)$, and we solve the problem (\ref{wee1})-(\ref{wee3})
then, (\ref{wee1})-(\ref{wee3}) admits a unique solution
$$\phi\in C(0,T;H^1_0(\Omega))\cap C^1(0,T;L^2(\Omega)).$$
In addition, we have the following regularity result:
$\frac{\partial\phi}{\partial\nu}\in L^2(0,T;L^2(\Gamma_0))$ and
there exist a constant
 $C_0>0$ such that
 $$\forall(\phi^0,\phi^1)\in\mathcal{D}(\Omega)\times\mathcal{D}(\Omega),\quad\left\|\frac{\partial\phi}{\partial\nu}\right\|^2_{
 L^2(0,T;L^2(\Gamma_0))}\leq
 C_0(\|\phi^0\|^2_{H^1_0}+\|\phi^1\|^2_{L^2}).$$
 This inequality express that the application
 $(\phi^0,\phi^1)\longrightarrow\frac{\partial\phi}{\partial\nu}$
 extends to a continuous linear application from $H^1_0(\Omega)\times L^2(\Omega)$
 to $ L^2(0,T;L^2(\Gamma_0))$. Now, we consider the following
 backward problem associated to (\ref{wee1})-(\ref{wee3})
 \begin{equation}\label{be1}
\frac{\partial^2\psi}{\partial t^2}-\Delta \psi=0,\quad\Omega\times
(0,+\infty),
\end{equation}
\begin{equation}\label{be2}
\psi=-\frac{\partial\phi}{\partial\nu},\quad\Gamma_0\times
(0,+\infty),
\end{equation}
\begin{equation}\label{be3}
\psi=0,\quad\Gamma_1\times (0,+\infty),
\end{equation}
\begin{equation}\label{be4}
\psi(x,T)=\frac{\partial \psi}{\partial t}(x,T)=0,\quad \Omega,
\end{equation}
then, (\ref{be1})-(\ref{be4}) admits unique solution
$$\psi\in C(0,T;L^2(\Omega))\cap C^1(0,T;H^{-1}(\Omega)),$$
thus, $\psi(x,0)$ and $\frac{\partial\psi}{\partial t}(x,0)$ are
well defined in $L^2(\Omega)$ and $H^{-1}(\Omega)$ respectively. In
fact, the density of $\mathcal{D}(\Omega)$ in $H^1_0(\Omega)$ and
$L^2(\Omega)$ allows us to do all the above steps for
$(\phi^0,\phi^1)\in H^1_0(\Omega)\times L^2(\Omega)$. If we can find
$(\phi^0,\phi^1)\in H^1_0(\Omega)\times L^2(\Omega)$ such that
\begin{equation*}
\left\{
\begin{array}{c}
  \psi(x,0)=z^0(x) \\
  \frac{\partial\psi}{\partial t}(x,0)=z^1(x) \\
\end{array}
\right.
\end{equation*}
we resolve the control problem (\ref{weee1})-(\ref{weee4}) with
$g=-\frac{\partial\phi}{\partial\nu}$ and $z=\psi$.
 Hence, for $(\phi^0,\phi^1)\in H^1_0(\Omega)\times L^2(\Omega)$, we define the
following operator
$$\Lambda(\phi^0,\phi^1)=(\frac{\partial\psi}{\partial
t}(0),-\psi(0))\in L^2(\Omega)\times H^{-1}(\Omega).$$ It's easy to
check that $\Lambda$ is a linear continuous operator from
$F=H^1_0(\Omega)\times L^2(\Omega)$ onto  its dual
$F'=L^2(\Omega)\times H^{-1}(\Omega)$.\\
 In fact, let $\phi=(\phi^0,\phi^1)\in F,\quad
 \tilde{\phi}=(\tilde{\phi}^0,\tilde{\phi}^1)\in F$, then
 multiplying (\ref{be1}) by $\tilde{\phi}$ and integrating by parts, we obtain
 $$\left\langle\Lambda(\phi^0,\phi^1),(\tilde{\phi}^0,\tilde{\phi}^1)\right
\rangle_{F',F}=\int_0^T\int_{\Gamma_0}\frac{\partial
\phi}{\partial\nu}\frac{\partial\tilde{\phi}}{\partial\nu}d\Gamma_0dt,$$
in particular
$$\left\langle\Lambda(\phi^0,\phi^1),(\phi^0,\phi^1)\right
\rangle_{F',F}=\int_0^T\int_{\Gamma_0}\left|\frac{\partial
\phi}{\partial\nu}\right|^2d\Gamma_0dt.$$ If we show that the
continuous bilinear form defined on $F\times F$ by
$$((\phi^0,\phi^1),(\tilde{\phi}^0,\tilde{\phi}^1))\longrightarrow\left\langle\Lambda(\phi^0,\phi^1),(\tilde{\phi}^0,\tilde{\phi}^1)\right
\rangle_{F',F}$$ is coercive, then according to the Lax-Milgram
lemma, we have:  for all $(z^0,z^1)\in L^2(\Omega)\times
H^{-1}(\Omega)$, there exist $(\phi^0,\phi^1)\in H^1_0(\Omega)\times
L^2(\Omega)$ such that
$$\Lambda(\phi^0,\phi^1)=(z^1,-z^0).$$ That's
to say that we have solved the problem of exact controllability of
(\ref{weee1})-(\ref{weee4}). The coercivity is equivalent to:  there
exists a constant  $C>0$ such that $\forall\,(\phi^0,\phi^1)\in
H^1_0(\Omega)\times L^2(\Omega)$
\begin{equation}\label{ioe}
\int_0^T\int_{\Gamma_0}\left|\frac{\partial\phi}{\partial\nu}\right|^2d\Gamma_0dt\geq
C \|(\phi^0,\phi^1)\|^2_{H^1_0(\Omega)\times (L^2(\Omega)}.
\end{equation}
Thus, obtaining (\ref{ioe}) is sufficient condition for exact
controllability of (\ref{weee1})-(\ref{weee4}). More precisely, we
show that (\ref{weee1})-(\ref{weee4}) is exactly controllable in
time $T>0$ if and only if (\ref{ioe}) holds. Hence, HUM can reduce
the problem of exact controllability to the obtention of such
inequality (\ref{ioe}) for (\ref{wee1})-(\ref{wee3}). But we cannot
hope to get (\ref{ioe}) without any conditions, in fact several
types of conditions have been considered and in \cite{BLR}, Bardos,
Lebeau and Rauch also Burq and Gérard \cite{BU} gave a  necessary
and sufficient condition in the case of "very regular" geometrical
domain.

\vskip 0.3cm

\section{Proof of main result and abstract setting}

Before starting the proof of our result, we shall make the following
hypothesis:
\begin{equation}\label{holmg}
\Sigma_0\;\text{allows the application of the Holmgren's Uniqueness
Theorem},
\end{equation}where $$\Sigma_0=\Gamma_0\times [0,T].$$

 Let $F$ be the completion of $H^1_0(\Omega)\times L^2(\Omega)$ with respect to the norm
 $$\|(\phi^0,\phi^1)\|^2_F=\int_0^T\int_{\Gamma_0}\left|\frac{\partial
\phi}{\partial\nu}\right|^2d\Gamma_0dt,$$ and such that $$ F\subset
L^2(\Omega)\times H^{-1}(\Omega).$$ In fact, since we assume
(\ref{holmg}) holds true, the functional $\|.\|_F$ is a norm. Let us
recall the Holmgren's Uniqueness Theorem (see Hörmander \cite{HO}).
\begin{thm} If $u\in \mathcal{D}'(\Omega)$ is a solution of a
differential equation
$$P(x,D)u=0$$
with analytic coefficients, then $u=0$ in a neighborhood  of
non-characteristic $C^1$ if this true on one side.
\end{thm}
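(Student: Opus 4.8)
The plan is to prove Holmgren's theorem by \emph{duality}, turning the vanishing of $u$ into a statement about solvability of the \emph{adjoint} Cauchy problem, and to solve the latter for analytic data by means of the Cauchy--Kovalevskaya theorem. First I would localize near a point $x_0$ of the non-characteristic $C^1$ hypersurface $S$ and flatten $S$ by an analytic change of variables, so that $x_0=0$, $S\subset\{x_n=0\}$, and the side on which $u$ vanishes becomes $\{x_n<0\}$. Because $S$ is non-characteristic, the coefficient of the top-order normal derivative $D_n^m$ in $P$ is nonzero at $0$; dividing by it, I may take $P=D_n^m+\cdots$ with the remaining terms of lower order in $D_n$ and all coefficients analytic. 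The transpose $P^{t}$ then also has analytic coefficients and the same characteristic surfaces.

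Next I would introduce a one-parameter family of strictly convex analytic hypersurfaces, for instance the paraboloids $S_\lambda=\{\phi=\lambda\}$ with $\phi(x)=x_n+c\,|x'|^2$, chosen so that $S_0$ passes through $0$ while the rest of $S_0$ lies in the region $\{x_n<0\}$ where $u$ already vanishes. For $c$ large and the neighborhood small, every $S_\lambda$ is non-characteristic, since non-characteristicity is an open condition on the conormal direction, and as $\lambda$ increases the caps $\{\phi<\lambda\}$ sweep out a full neighborhood of $0$. By Cauchy--Kovalevskaya applied to $P^{t}$, for any analytic right-hand side $f$ the Cauchy problem $P^{t}v=f$ with zero Cauchy data on $S_\lambda$ has a unique analytic solution $v$ near the relevant cap, and the point I would insist upon is that the \emph{size} of this neighborhood can be bounded below in terms only of fixed bounds on the coefficients of $P$ and on $f$ over a fixed polydisc.

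The propagation step is then a connectedness argument. Let $I$ be the set of $\lambda$ for which $u=0$ in $\{\phi<\lambda\}$ near $0$; by construction $0\in I$ and $I$ is closed. To see that $I$ is open to the right, assume $u=0$ in $\{\phi<\lambda\}$ and take any test function $\psi$ supported in a thin cap $\{\lambda\le\phi\le\lambda+\delta\}$. Approximating $\psi$ uniformly, together with its derivatives up to order $m$, by analytic functions $f_k$ (Weierstrass approximation by polynomials, or mollification), I solve $P^{t}v_k=f_k$ with zero Cauchy data on $S_\lambda$; the uniform lower bound on the domain of existence keeps all $v_k$ defined on one common neighborhood of the cap. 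Integrating by parts,
\[
\langle u,f_k\rangle=\langle u,P^{t}v_k\rangle=\langle Pu,v_k\rangle+B_\lambda(u,v_k),
\]
where $B_\lambda$ collects the boundary terms on $S_\lambda$. Since $Pu=0$, and the boundary terms vanish because $v_k$ has zero Cauchy data on $S_\lambda$ while $u$ vanishes on the $\{\phi<\lambda\}$ side, we get $\langle u,f_k\rangle=0$; letting $f_k\to\psi$ gives $\langle u,\psi\rangle=0$. As $\psi$ was arbitrary, $u=0$ on the cap, so $\lambda+\delta\in I$ with $\delta$ controlled uniformly. Hence $I$ fills the whole interval and $u$ vanishes in a neighborhood of $x_0$.

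The main obstacle is precisely the limit passage in this approximation step: Cauchy--Kovalevskaya only produces solutions for analytic data, and a priori the domain of analyticity of $v_k$ could collapse as $f_k\to\psi$. The heart of the proof is therefore to extract from the Cauchy--Kovalevskaya majorant construction a radius of convergence for $v_k$ depending only on fixed data (coefficient bounds and the supremum of $f_k$ on a fixed polydisc), so that the $v_k$ share a common neighborhood independent of $k$. Combined with the strict convexity of the $S_\lambda$, which confines each thin cap inside that common neighborhood, this uniformity is what legitimizes letting $k\to\infty$ and closes the argument.
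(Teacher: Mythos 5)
The paper does not actually prove this statement: it is recalled verbatim from H\"ormander \cite{HO} as a classical tool, so the only meaningful comparison is with the classical proof. Your outline \emph{is} that classical proof (John's and H\"ormander's duality argument): solve the adjoint Cauchy problem by Cauchy--Kovalevskaya, test $u$ against those solutions, approximate test functions by analytic ones, and sweep a family of strictly convex analytic surfaces through a neighbourhood of $x_0$. You also correctly isolate the real crux, namely the lower bound on the Cauchy--Kovalevskaya existence radius depending only on coefficient bounds and a sup bound on the data, without which the limit $f_k\to\psi$ is illegitimate; that uniform-radius lemma is the hard quantitative content of the majorant method, and your sketch asserts it rather than proves it, which is acceptable at this level of detail.

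Two steps, however, are genuinely flawed as written. First, you cannot ``flatten $S$ by an analytic change of variables'': the hypersurface is only $C^1$, and a merely $C^1$ flattening destroys the analyticity of the coefficients of $P$, which is precisely the hypothesis Cauchy--Kovalevskaya (and the theorem itself, which is false for general $C^\infty$ coefficients) requires. The repair is standard and cheap: perform only a \emph{linear} change of coordinates so that the conormal of $S$ at $x_0$ is $dx_n$, write $S$ locally as a $C^1$ graph $x_n=h(x')$ with $h(0)=0$, $dh(0)=0$, and run your paraboloid family $\{x_n+c\,|x'|^2=\lambda\}$ beneath the graph; the relation $h(x')=o(|x'|)$ is exactly what confines the thin caps between the paraboloids and $S$ to a small neighbourhood. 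Second, the identity $\langle u,P^{t}v_k\rangle=\langle Pu,v_k\rangle+B_\lambda(u,v_k)$ is purely formal: $u$ is a distribution with no traces on $S_\lambda$, so the ``boundary terms'' are undefined. The standard rigorous device is to use the vanishing Cauchy data of $v_k$ of order $m-1$ on $S_\lambda$ to extend $v_k$ by zero across $S_\lambda$, obtaining $w_k\in C^{m-1}$ with $P^{t}w_k=f_k\mathbf{1}_{\{\phi<\lambda\}}$ distributionally and \emph{no} surface terms, and then to pair a compactly cut-off $u$ with $w_k$; since $w_k$ is only $C^{m-1}$ while $u$ has some finite local order $N$, you must either arrange the data to vanish to order $N+m$ (e.g.\ solve with right-hand side $(\phi-\lambda)^{N}f_k$ type modifications) or smooth $w_k$, and correspondingly the approximation $f_k\to\psi$ must hold in $C^{N}$ on a fixed compact set, not merely uniformly with $m$ derivatives as you state. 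With these two repairs your argument closes and coincides with the proof in \cite{HO}.
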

We recall that a $C^1$ surface $S\subset\mathbb{R}^n$ with normal
$\xi$ at $x$ is said to be non-characteristic at $x$ for $P(x,D)$ if
$$P_m(x,\xi)\neq 0,$$ where $P(x,D)=\sum_{|\alpha|\leq m}a_\alpha(x)D^\alpha$ is a differential operator with principal symbol
$$P_m(x,\xi)=\sum_{|\alpha|\leq m}a_\alpha(x)\xi^\alpha.$$
Since we suppose that (\ref{woi}) holds true, then $\Sigma_0$
doesn't necessarily  satisfy the geometrical control condition, but
satisfying the condition for the Holmgren's Uniqueness Theorem to
apply, then

$$\|(\phi^0,\phi^1)\|^2_F=\int_0^T\int_{\Gamma_0}\left|\frac{\partial\phi}{\partial\nu}\right|^2d\Gamma_0dt$$
 is a norm strictly weaker than the $ H^1_0(\Omega)\times L^2(\Omega)$.

 Define the
operator $\Lambda$ by
$$\Lambda: F\longrightarrow F',\quad
\Lambda(\phi^0,\phi^1)=(-\frac{\partial z}{\partial
t}(x,0),z(x,0)).$$ An important ingredients of the proof of
Proposition \ref{rp} is following technical  result.
\begin{lemma}
$\Lambda$ is an isomorphism of $F$ onto $F'$.
\end{lemma}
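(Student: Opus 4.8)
The plan is to show that $\Lambda$ is an isomorphism of $F$ onto $F'$ by the Lax--Milgram lemma applied to the bilinear form
$$a\bigl((\phi^0,\phi^1),(\tilde\phi^0,\tilde\phi^1)\bigr)=\left\langle \Lambda(\phi^0,\phi^1),(\tilde\phi^0,\tilde\phi^1)\right\rangle_{F',F}=\int_0^T\int_{\Gamma_0}\frac{\partial\phi}{\partial\nu}\,\frac{\partial\tilde\phi}{\partial\nu}\,d\Gamma_0\,dt,$$
exactly as in the classical HUM argument recalled in Section~2, but now working in the completed space $F$ rather than in $H^1_0(\Omega)\times L^2(\Omega)$. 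First I would verify that $a$ is well defined, bilinear, and continuous on $F\times F$: continuity is immediate, since by Cauchy--Schwarz $|a((\phi^0,\phi^1),(\tilde\phi^0,\tilde\phi^1))|\le \|(\phi^0,\phi^1)\|_F\,\|(\tilde\phi^0,\tilde\phi^1)\|_F$, so $\Lambda$ is a bounded linear operator from $F$ into $F'$.

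The decisive point, and the reason the whole construction of $F$ was arranged as it was, is coercivity. Here I would simply read off
$$a\bigl((\phi^0,\phi^1),(\phi^0,\phi^1)\bigr)=\int_0^T\int_{\Gamma_0}\left|\frac{\partial\phi}{\partial\nu}\right|^2d\Gamma_0\,dt=\|(\phi^0,\phi^1)\|_F^2,$$
so the form is not merely coercive but exactly equal to the square of the $F$-norm on the diagonal. This is where the passage to the completion pays off: in the classical setting coercivity forced one to prove the observability inequality~(\ref{ioe}) relating the boundary integral to the $H^1_0\times L^2$ norm, whereas here the norm of $F$ is \emph{defined} to be that boundary integral, so coercivity holds tautologically with constant $1$. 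By Lax--Milgram, $a$ being continuous and coercive on the Hilbert space $F$ guarantees that for every $\ell\in F'$ there is a unique $(\phi^0,\phi^1)\in F$ with $a((\phi^0,\phi^1),\cdot)=\ell$, i.e. $\Lambda$ is a bijection of $F$ onto $F'$, and the bounds from continuity and coercivity give that $\Lambda$ and $\Lambda^{-1}$ are both bounded, so $\Lambda$ is an isomorphism.

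The genuine obstacle is not the Lax--Milgram step but the two facts that legitimise it, namely that $\|\cdot\|_F$ is actually a norm (not just a seminorm) on $H^1_0(\Omega)\times L^2(\Omega)$, and that the inclusion $F\subset L^2(\Omega)\times H^{-1}(\Omega)$ holds so that $F'$ can be identified with the space in which the target data $(z^0,z^1)$ live. Definiteness of $\|\cdot\|_F$ is precisely the content of hypothesis~(\ref{holmg}): if $\partial\phi/\partial\nu$ vanishes on $\Sigma_0=\Gamma_0\times[0,T]$, then since $\phi$ solves the analytic-coefficient wave equation with $\phi=0$ on $\partial\Omega$, Holmgren's Uniqueness Theorem forces $\phi\equiv 0$ in a neighbourhood of $\Sigma_0$ and hence, by propagation, $(\phi^0,\phi^1)=0$; this is what makes $F$ a bona fide Hilbert space on which Lax--Milgram can be invoked. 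For the continuous embedding $F\hookrightarrow L^2(\Omega)\times H^{-1}(\Omega)$ I would invoke the hidden-regularity/transposition estimate recalled in Section~2 together with the weak observability inequality~(\ref{woi}), which controls the $L^2\times H^{-1}$ norm by the $F$-norm; this inequality, whose proof is deferred to the appendix, is exactly the ingredient that distinguishes the weak setting from the exact one and guarantees that $F$ sits inside $L^2(\Omega)\times H^{-1}(\Omega)$ with a continuous injection.
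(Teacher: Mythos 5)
Your proposal is correct and follows essentially the same route as the paper: both apply Lax--Milgram to the HUM bilinear form $\left\langle\Lambda(\phi^0,\phi^1),(\tilde\phi^0,\tilde\phi^1)\right\rangle_{F',F}=\int_0^T\int_{\Gamma_0}\frac{\partial\phi}{\partial\nu}\frac{\partial\tilde\phi}{\partial\nu}\,d\Gamma_0\,dt$ on the completion $F$, where coercivity holds by construction since the diagonal of the form is exactly $\|(\phi^0,\phi^1)\|_F^2$ (the paper re-derives this identity by multiplying the backward problem by $\phi$ and integrating by parts, then extends by density, whereas you cite the Section~2 identity directly). Your additional remarks on why $\|\cdot\|_F$ is a genuine norm (via Holmgren, hypothesis~(\ref{holmg})) and why $F\hookrightarrow L^2(\Omega)\times H^{-1}(\Omega)$ (via~(\ref{woi})) make explicit two points the paper only asserts, but do not change the substance of the argument.
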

\begin{proof}
Clearly $\Lambda$ is a bounded linear operator. The backward problem
associated to (\ref{wee1})-(\ref{wee3}) is
\begin{equation*}
\frac{\partial^2\psi}{\partial t^2}-\Delta \psi=0,\quad\Omega\times
(0,+\infty),
\end{equation*}
\begin{equation*}
\psi=-\frac{\partial\phi}{\partial\nu},\quad\Gamma_0\times
(0,+\infty),
\end{equation*}
\begin{equation*}
\psi=0,\quad\partial\Omega\backslash\Gamma_0\times (0,+\infty),
\end{equation*}
\begin{equation*}
\psi(x,T)=\frac{\partial \psi}{\partial t}(x,T)=0,\quad \Omega.
\end{equation*}
$F$  and $F'$ are in duality by: $\forall\,\phi=(\phi^0,\phi^1)\in
F',\,\tilde{\phi}=(\tilde{\phi}^0,\tilde{\phi}^1)\in F$
$$\left\langle(\phi^0,\phi^1),(\tilde{\phi}^0,\tilde{\phi}^1)\right
\rangle_{F',F}=\int_\Omega
\phi^1\tilde{\phi}^0-\phi^0\tilde{\phi}^1dx,$$ we prove that
$\Lambda$ is coercive. In fact,  applying the Lax-Milgram theorem,
it sufficies to show the existence of a constant $c>0$ such that
$$\left\langle\Lambda(\phi^0,\phi^1),(\phi^0,\phi^1)\right\rangle_{F',F}\geq
c\|(\phi^0,\phi^1)\|^2_F,\quad\forall\,(\phi^0,\phi^1)\in F.$$ In
fact,  multiplying (\ref{be1}) by $\phi$ and integrating by parts,
we obtain
\begin{equation*}
\begin{split}
0&=\int_0^T\int_{\Omega}\phi(\psi''-\Delta\psi)dxdt=\left[\int_{\Omega}(\phi\psi'-\phi'\psi) dx\right]^T_0\\
&+\int_0^T\int_{\Omega}\phi''\psi dxdt-\int_0^T\int_{\Omega}(\Delta\psi)\phi dxdt\\
\end{split}
\end{equation*} and therefore
$$\int_{\Omega}[\phi^0\psi'(0)-\phi^1\psi(0)]dx=\int_0^T\int_{\Gamma_0}\left|\frac{\partial
\phi}{\partial\nu}\right|^2d\Gamma_0dt.$$ Consequently,
\begin{equation*}
\left\langle\Lambda(\phi^0,\phi^1),(\phi^0,\phi^1)\right\rangle_{F',F}=\int_0^T\int_{\Gamma_0}\left|\frac{\partial
\phi}{\partial\nu}\right|^2d\Gamma_0dt\geq
c\|(\phi^0,\phi^1)\|^2_F,\quad\forall\,(\phi^0,\phi^1)\in
H^1_0(\Omega)\times L^2(\Omega)
\end{equation*}
and by density argument, we get
\begin{equation*}
\left\langle\Lambda(\phi^0,\phi^1),(\phi^0,\phi^1)\right\rangle_{F',F}=\int_0^T\int_{\Gamma_0}\left|\frac{\partial
\phi}{\partial\nu}\right|^2d\Gamma_0dt\geq
c\|(\phi^0,\phi^1)\|^2_F,\quad\forall\,(\phi^0,\phi^1)\in F.
\end{equation*}
\end{proof}

\begin{proof}(of Theorem \ref{rp}).
As we have seen in the previous lemma, we have the following
inequality
$$\left\langle\Lambda(\phi^0,\phi^1),(\phi^0,\phi^1)\right\rangle_{F',F}=\int_0^T\int_{\Gamma_0}\left|\frac{\partial
\phi}{\partial\nu}\right|^2d\Gamma_0dt\geq
c\|(\phi^0,\phi^1)\|^2_F,\quad\forall\,(\phi^0,\phi^1)\in F,$$ on
the other hand
\begin{equation*}
\begin{split}
\|(\phi^0,\phi^1)\|^2_F&\leq
C\int_0^T\int_{\Gamma_0}\left|\frac{\partial
\phi}{\partial\nu}\right|^2d\Gamma_0dt\\
&=C\left\langle\Lambda(\phi^0,\phi^1),(\phi^0,\phi^1)\right\rangle_{F',F}\\
&=C\left\langle(\psi'(0),-\psi(0)),(\phi^0,\phi^1)\right\rangle_{F',F}\\
&\leq C\|(z^1,z^0)\|_{F'}\|(\phi^0,\phi^1)\|_F\\
\end{split}
\end{equation*}
hence
\begin{equation}\label{azert}
\|(\phi^0,\phi^1)\|_F\leq C\|(z^1,z^0)\|_{F'}.
\end{equation}
Let $g=-\frac{\partial\phi}{\partial\nu}$, then
\begin{equation*}
\begin{split}
\left\|g\right\|^2_{L^2(0,T;L^2(\Gamma_0))}&=\int_0^T\int_{\Gamma_0}\left|\frac{\partial
\phi}{\partial\nu}\right|^2d\Gamma_0dt\\
&=\left\langle(z^1,-z^0),(\phi^0,\phi^1)\right\rangle_{F',F}\\
&\leq \|(z^1,z^0)\|_{F'}\|(\phi^0,\phi^1)\|_F\\
\end{split}
\end{equation*}
by (\ref{azert}), we get
$$\left\|g\right\|_{L^2(0,T;L^2(\Gamma_0))}\leq C
\|(z^0,z^1)\|_{F'}\leq C \|(z^0,z^1)\|_{H^1_0(\Omega)\times
L^2(\Omega)},$$ \noindent
and $g$ derives the system (\ref{weee1})-(\ref{weee4}) to zero.\\
Conversely, suppose that there exists a control $g\in
L^2([0,T],;L^2(\Gamma_0))$ satisfying (\ref{cost}) and that the
solution of (\ref{weee1})-(\ref{weee4}) verify (\ref{nul}),
therefore, by the previous lemma we have in particular that
$\Lambda^{-1}$ is continuous , in particular it verifies
$$\|\Lambda(\phi^0,\phi^1)\|_{F'}\geq C\|(\phi^0,\phi^1)\|_F,\quad
\text{for some }\,C>0,$$ and then by continuous imbedding of  $F$ in
$L^2(\Omega)\times H^{-1}(\Omega)$,  we get (\ref{woi}).
\end{proof}

\vskip 0.3cm

\subsection{Variational approach}
In this section we will see how the weak controllability property of
system (\ref{weee1})-(\ref{weee4}) is a consequence of (\ref{woi})
by a minimization method which yields the control of minimal
$L^2(0,T;L^2(\Gamma))$-norm. Spaces $L^2(\Omega)\times
H^{-1}(\Omega)$ and $H^1_0(\Omega)\times L^2(\Omega)$ are in duality
by
$$\left\langle(\phi^0,\phi^1),(z^0,z^1)\right\rangle=\langle\phi^0,z^1\rangle_{H^{-1},H^1_0}-\langle\phi^1,z^0\rangle_{L^2},$$ for all $(\phi^0,\phi^1)\in
L^2(\Omega)\times
H^{-1}(\Omega) $. \\
\\
Let us consider the functional $\mathcal{J}:L^2(\Omega)\times
H^{-1}(\Omega)\longrightarrow \mathbb{R}$ defined by
\begin{equation}\label{min}
\mathcal{J}(\phi^0,\phi^1)=\frac{1}{2}\int_0^T\int_{\Gamma_0}\left|\frac{\partial
\phi}{\partial\nu}\right|^2d\Gamma_0dt+\left\langle(\phi^0,\phi^1),(z^0,z^1)\right\rangle.
\end{equation} where $\phi$  is the solution of the
homogenous system (\ref{wee1})-(\ref{wee3}) with initial data
$(\phi^0,\phi^1)\in L^2(\Omega)\times
H^{-1}(\Omega)$.\\
\\
Thus we have:
\begin{thm}
Let $(\phi^0,\phi^1)\in L^2(\Omega)\times H^{-1}(\Omega)$ and
suppose that $(\tilde{\phi}^0,\tilde{\phi}^1)\in L^2(\Omega)\times
H^{-1}(\Omega)$ is a minimizer of $\mathcal{J}$. If $\tilde{\phi}$
is the corresponding solution of (\ref{wee1})-(\ref{wee3}) with
initial data $(\tilde{\phi}^0,\tilde{\phi}^1)$ then
\begin{equation}\label{vert}
g=-\frac{\partial\tilde{\phi}}{\partial\nu}|_{\Gamma_0}
\end{equation} is a control which leads $(z^0,z^1)$ to zero in time
$T$.
\end{thm}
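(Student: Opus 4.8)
The plan is to derive the Euler--Lagrange equation satisfied by the assumed minimizer $(\tilde{\phi}^0,\tilde{\phi}^1)$ and then feed it into a weak integration-by-parts identity relating the controlled state $z$ to the adjoint (homogeneous) states. Since $\mathcal{J}$ is the sum of the continuous quadratic form $(\phi^0,\phi^1)\mapsto \tfrac12\int_0^T\int_{\Gamma_0}|\partial_\nu\phi|^2\,d\Gamma_0\,dt$ --- continuous on $L^2(\Omega)\times H^{-1}(\Omega)$ by the hidden-regularity bound --- and a bounded linear form, it is G\^ateaux differentiable. Writing the first-order optimality condition at the minimizer against an arbitrary direction $(\psi^0,\psi^1)\in L^2(\Omega)\times H^{-1}(\Omega)$, with $\psi$ the solution of (\ref{wee1})--(\ref{wee3}) with data $(\psi^0,\psi^1)$, I would obtain
\[
\int_0^T\int_{\Gamma_0}\partial_\nu\tilde{\phi}\,\partial_\nu\psi\,d\Gamma_0\,dt+\langle(\psi^0,\psi^1),(z^0,z^1)\rangle=0 .
\]

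Next I would establish the key duality identity. Setting $g=-\partial_\nu\tilde{\phi}|_{\Gamma_0}$ and multiplying the controlled equation (\ref{w1}) by the homogeneous solution $\psi$, integrating over $\Omega\times(0,T)$ and integrating by parts twice in time together with Green's formula in space, all interior terms cancel because $\psi$ solves the homogeneous wave equation. Since $\psi$ vanishes on the whole boundary the term $\psi\,\partial_\nu z$ drops out, and because $z=g$ on $\Gamma_0$ and $z=0$ on $\partial\Omega\setminus\Gamma_0$ the only surviving lateral contribution is $\int_0^T\int_{\Gamma_0}g\,\partial_\nu\psi\,d\Gamma_0\,dt$. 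This yields
\[
\Big[\int_\Omega\big(z'\psi-z\psi'\big)\,dx\Big]_0^T=\int_0^T\int_{\Gamma_0}\partial_\nu\tilde{\phi}\,\partial_\nu\psi\,d\Gamma_0\,dt ,
\]
and the $t=0$ boundary term reproduces exactly $\langle(\psi^0,\psi^1),(z^0,z^1)\rangle$ after inserting $z(0)=z^0,\ z'(0)=z^1,\ \psi(0)=\psi^0,\ \psi'(0)=\psi^1$.

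Then I would combine the two identities. Substituting the optimality condition into the integration-by-parts identity, the pairing $\langle(\psi^0,\psi^1),(z^0,z^1)\rangle$ coming from $t=0$ cancels against the one produced by the lateral term, leaving
\[
\int_\Omega\big(z'(T)\psi(T)-z(T)\psi'(T)\big)\,dx=0\qquad\text{for every }(\psi^0,\psi^1).
\]
By the time-reversibility of the wave equation the final-data map $(\psi^0,\psi^1)\mapsto(\psi(T),\psi'(T))$ is onto the relevant space, so the vanishing of this pairing for all test states forces $z(\cdot,T)=0$ and $z'(\cdot,T)=0$. Solving (\ref{w1}) forward from $t=T$ with zero Cauchy data and invoking uniqueness then gives $z(x,t)=0$ for all $t\ge T$, i.e. $g$ drives $(z^0,z^1)$ to rest in time $T$.

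The main obstacle is regularity and the rigorous meaning of every integral above. Because the data only lie in $L^2(\Omega)\times H^{-1}(\Omega)$, the states $\phi,\psi,z$ must be understood in the transposition sense, $\partial_\nu\phi$ is defined only through the hidden-regularity estimate as an element of $L^2(\Gamma_0\times(0,T))$, and the pairings are genuine duality brackets rather than integrals. All the formal integrations by parts must therefore be justified by first working with smooth data in $\mathcal{D}(\Omega)\times\mathcal{D}(\Omega)$ and passing to the limit using the continuity of $(\phi^0,\phi^1)\mapsto\partial_\nu\phi$ together with the weak observability inequality (\ref{woi}) --- precisely the density procedure already used in the preceding lemma. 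Tracking the signs carefully so that the two occurrences of $\langle(\psi^0,\psi^1),(z^0,z^1)\rangle$ cancel is the one point demanding care.
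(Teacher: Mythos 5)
Your proposal cannot be compared with the paper's proof for a simple reason: the paper never proves this theorem. Immediately after stating it, the text turns to ``sufficient conditions ensuring the existence of a minimizer,'' and the proof that follows (coercivity, strict convexity) belongs to the \emph{next} theorem, on existence and uniqueness of the minimizer of $\mathcal{J}$; the assertion that the minimizer actually produces a null control is left unproven. Your argument therefore fills a genuine gap in the paper, and it does so by the classical HUM/variational route: the Euler--Lagrange equation
\[
\int_0^T\int_{\Gamma_0}\partial_\nu\tilde{\phi}\,\partial_\nu\psi\,d\Gamma_0\,dt+\left\langle(\psi^0,\psi^1),(z^0,z^1)\right\rangle=0,
\]
the transposition identity obtained by multiplying (\ref{w1}) by the homogeneous solution $\psi$ and integrating by parts, the cancellation of the two duality pairings (your signs are correct: with $g=-\partial_\nu\tilde\phi$ the lateral term reproduces $-\langle(\psi^0,\psi^1),(z^0,z^1)\rangle$, which kills the $t=0$ contribution), and time-reversibility of the wave group to conclude $z(T)=z'(T)=0$, hence $z\equiv 0$ for $t\geq T$ after extending $g$ by zero. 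This is the standard proof of this statement and is surely what the authors had in mind.

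One technical claim in your write-up is wrong and needs repair, though it is partly inherited from the paper's own loose functional-analytic setting. The quadratic part of $\mathcal{J}$ is \emph{not} continuous on $L^2(\Omega)\times H^{-1}(\Omega)$: the hidden-regularity estimate bounds $\|\partial_\nu\phi\|_{L^2(\Sigma_0)}$ by the $H^1_0(\Omega)\times L^2(\Omega)$ norm of the data, not by the weaker $L^2(\Omega)\times H^{-1}(\Omega)$ norm, and in the weak-observability framework of this paper the set where that term is finite is (essentially) the completion space $F$, which sits strictly between $H^1_0(\Omega)\times L^2(\Omega)$ and $L^2(\Omega)\times H^{-1}(\Omega)$. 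Consequently you cannot take G\^ateaux variations in arbitrary directions $(\psi^0,\psi^1)\in L^2(\Omega)\times H^{-1}(\Omega)$; for such directions the cross term $\int_0^T\int_{\Gamma_0}\partial_\nu\tilde{\phi}\,\partial_\nu\psi\,d\Gamma_0\,dt$ has no meaning. The fix is cheap and does not weaken the conclusion: take directions $(\psi^0,\psi^1)\in H^1_0(\Omega)\times L^2(\Omega)$ (or in $F$). Since the wave flow at time $T$ is an isomorphism of $H^1_0(\Omega)\times L^2(\Omega)$, the final data $(\psi(T),\psi'(T))$ still sweep out all of $H^1_0(\Omega)\times L^2(\Omega)$, and since $z(T)\in L^2(\Omega)$ and $z'(T)\in H^{-1}(\Omega)$, the vanishing of the duality pairing against this class of test states still forces $z(T)=z'(T)=0$. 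With that adjustment (which your closing paragraph on density and transposition solutions already gestures toward), your proof is correct.
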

Let us give sufficient conditions ensuring the existence of a
minimizer of $\mathcal{J}$. For that we recall the following
fundamental result in the calculus of variations which is a
consequence of the so called Direct method for the calculus of
variations. For a proof, see \cite{ET}.
\begin{prp}\label{tp}
Let $H$ be a reflexive Banach space, $K$ a closed convex subset of
$H$ and $\phi:K\longrightarrow\mathbb{R}$ is a function with the
following properties:
\begin{enumerate}
\item $\phi$ is convex
\item $\phi$ is lower semi-continuous
\item If $K$ is unbounded then $\phi$ is coercive, i.e.,
$$\displaystyle{\lim _{\|x\|\rightarrow
+\infty}}\, \phi(x)=+\infty.$$
\end{enumerate}
Then $\phi$ attains its minimum in $K$, i. e, there exists $x_0\in
K$ such that
\begin{equation}
\phi(x_0)=\displaystyle{\min _{x\in K}}\, \phi(x).
\end{equation}
\end{prp}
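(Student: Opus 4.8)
The plan is to apply the Direct Method of the calculus of variations. I would set $m=\inf_{x\in K}\phi(x)$, which is strictly less than $+\infty$ since $K$ is nonempty, and fix a minimizing sequence $(x_n)\subset K$ with $\phi(x_n)\to m$. The whole argument then reduces to three ingredients: extracting a weakly convergent subsequence of $(x_n)$, checking that its limit stays in $K$, and showing that $\phi$ cannot jump upward across a weak limit. Once these are in place, the conclusion $\phi(x_0)=m$ — and in particular the finiteness $m>-\infty$ — follows at once.

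First I would establish that the minimizing sequence is bounded. If $K$ is bounded this is immediate. If $K$ is unbounded, I invoke the coercivity hypothesis: since $\phi(x_n)\to m$, the values $\phi(x_n)$ are eventually bounded above, say by $m+1$, so a tail of $(x_n)$ lies in the sublevel set $\{x\in K:\phi(x)\leq m+1\}$, which coercivity forces to be norm-bounded; hence $(x_n)$ is bounded. Because $H$ is reflexive, its bounded sets are relatively weakly sequentially compact (Kakutani together with the Eberlein--Smulian theorem), so after passing to a subsequence I obtain $x_{n_k}\rightharpoonup x_0$ for some $x_0\in H$. To see that $x_0\in K$, I use that $K$ is convex and strongly closed, hence weakly closed by Mazur's lemma, so a weak limit of points of $K$ remains in $K$.

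The last and most delicate step is the weak lower semicontinuity of $\phi$. Here I would argue through the sublevel sets: for each $\lambda\in\mathbb{R}$ the set $\{x\in K:\phi(x)\leq\lambda\}$ is convex by hypothesis~1 and strongly closed by hypothesis~2, hence again weakly closed by Mazur's lemma. This is precisely the statement that $\phi$ is sequentially weakly lower semicontinuous, and therefore $\phi(x_0)\leq\liminf_{k}\phi(x_{n_k})=m$. Combined with $\phi(x_0)\geq m$ by definition of the infimum, this gives $\phi(x_0)=m$, so the minimum is attained at $x_0\in K$.

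I expect the main obstacle to be exactly this passage from strong to weak lower semicontinuity: weak convergence is too coarse to control $\phi$ directly, and it is only the convexity assumption that rescues the argument by upgrading strongly closed sublevel sets to weakly closed ones. Everything else — finiteness of $m$, boundedness of the minimizing sequence via coercivity, and weak sequential compactness via reflexivity — is routine once those structural hypotheses are used.
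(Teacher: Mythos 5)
Your proof is correct, and it is precisely the Direct Method argument the paper appeals to: coercivity (or boundedness of $K$) to bound a minimizing sequence, reflexivity to extract a weakly convergent subsequence, Mazur's lemma to make $K$ and the convex closed sublevel sets weakly closed, hence weak sequential lower semicontinuity and attainment of the infimum. The paper gives no proof of its own --- it states the proposition as a consequence of the Direct Method and cites \cite{ET} --- and the proof there follows the same lines, so your approach coincides with the intended one.
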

  As a consequence, we get the following.
\begin{thm}
Let $(\phi^0,\phi^1)\in H^1_0(\Omega)\times L^2(\Omega)$ and suppose
that (\ref{woi})  holds, that's system (\ref{wee1})-(\ref{wee3}) is
weakly observable in time $T$. Then the functional $\mathcal{J}$
defined by (\ref{min}) has a unique minimizer
$(\tilde{\phi}^0,\tilde{\phi}^1)\in L^2(\Omega)(\Omega)\times
H^{-1}(\Omega)$.
\end{thm}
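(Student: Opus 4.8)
The plan is to produce $(\tilde{\phi}^0,\tilde{\phi}^1)$ by the direct method of the calculus of variations, i.e.\ by applying Proposition~\ref{tp}. By construction the quadratic part of $\mathcal{J}$ in (\ref{min}) is the square of the seminorm defining $F$, so the natural space on which to minimize is the Hilbert space $F$, the completion of $H^1_0(\Omega)\times L^2(\Omega)$ for $\|\cdot\|_F$; under the standing hypothesis (\ref{holmg}) this $\|\cdot\|_F$ is a genuine norm, so $F$ is a Hilbert, hence reflexive Banach, space. I would thus take $H=K=F$ in Proposition~\ref{tp}, $K=F$ being trivially a closed convex subset of itself. Because the weak observability inequality (\ref{woi}) furnishes the continuous inclusion $F\subset L^2(\Omega)\times H^{-1}(\Omega)$, any minimizer found in $F$ lies a fortiori in $L^2(\Omega)\times H^{-1}(\Omega)$, which is exactly the claim.

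First I would check convexity and lower semicontinuity. The term $\frac{1}{2}\|(\phi^0,\phi^1)\|_F^2$ is the square of the Hilbert norm of $F$, hence continuous and strictly convex, while the second term of (\ref{min}) is the linear functional $(\phi^0,\phi^1)\mapsto\langle(\phi^0,\phi^1),(z^0,z^1)\rangle$ attached to the fixed data $(z^0,z^1)$. This functional is bounded on $F$: the duality pairing gives $|\langle(\phi^0,\phi^1),(z^0,z^1)\rangle|\le\|(\phi^0,\phi^1)\|_{L^2(\Omega)\times H^{-1}(\Omega)}\,\|(z^0,z^1)\|_{H^1_0(\Omega)\times L^2(\Omega)}$, and (\ref{woi}) bounds the first factor by $\sqrt{C_{T_0}}\,\|(\phi^0,\phi^1)\|_F$. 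Hence $\mathcal{J}$ is continuous and strictly convex on $F$; a convex continuous functional is lower semicontinuous (and weakly so), so hypotheses (1) and (2) of Proposition~\ref{tp} hold.

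The crux, and the only place where (\ref{woi}) is indispensable, is coercivity. Putting $M=\sqrt{C_{T_0}}\,\|(z^0,z^1)\|_{H^1_0(\Omega)\times L^2(\Omega)}$, the bound on the linear term yields
$$\mathcal{J}(\phi^0,\phi^1)\ge\frac{1}{2}\|(\phi^0,\phi^1)\|_F^2-M\,\|(\phi^0,\phi^1)\|_F,$$
whose right-hand side tends to $+\infty$ as $\|(\phi^0,\phi^1)\|_F\to+\infty$; thus hypothesis (3) holds and Proposition~\ref{tp} provides a minimizer. Without weak observability this step collapses, for then the $L^2(\Omega)\times H^{-1}(\Omega)$-norm need not be controlled by $\|\cdot\|_F$, the linear term need not be dominated by the quadratic one, and $\mathcal{J}$ may be unbounded below. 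Uniqueness finally follows from strict convexity: two distinct minimizers would make the value of $\mathcal{J}$ at their midpoint strictly below the common minimum, a contradiction.
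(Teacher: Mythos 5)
Your proposal runs on exactly the same machinery as the paper's proof: Proposition~\ref{tp} (the direct method), coercivity extracted from the weak observability inequality (\ref{woi}), and uniqueness from strict convexity. The genuine difference is the choice of ambient space. The paper applies Proposition~\ref{tp} with $H=K=L^2(\Omega)\times H^{-1}(\Omega)$: there (\ref{woi}) is used once to show that the quadratic term of $\mathcal{J}$ dominates $\|(\phi^0,\phi^1)\|^2_{L^2(\Omega)\times H^{-1}(\Omega)}$ (coercivity in that norm), and a second time to upgrade convexity to strict convexity (distinct data must produce distinct traces $\partial_\nu\phi$). You instead minimize over the completion $F$, where the quadratic term is by construction $\frac{1}{2}\|\cdot\|_F^2$, so coercivity and strict convexity are automatic, and (\ref{woi}) enters only to make the linear term a bounded functional on $F$. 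Your variant is in one respect more careful than the paper's: for data merely in $L^2(\Omega)\times H^{-1}(\Omega)$ the boundary term of $\mathcal{J}$ need not be finite (the hidden regularity $\partial_\nu\phi\in L^2(0,T;L^2(\Gamma_0))$ is only guaranteed for data in $H^1_0(\Omega)\times L^2(\Omega)$ and, by extension, in $F$), so the paper's assertion that continuity on $L^2(\Omega)\times H^{-1}(\Omega)$ is ``easy to prove'' is questionable, whereas on $F$ the functional is genuinely continuous.

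That said, the last sentence of your first paragraph overreaches, and this is the one real gap. A minimizer of $\mathcal{J}$ restricted to $F$ is indeed an element of $L^2(\Omega)\times H^{-1}(\Omega)$, but it is not thereby a minimizer of $\mathcal{J}$ over $L^2(\Omega)\times H^{-1}(\Omega)$, which is what the statement (and the paper's proof, which works on all of $L^2(\Omega)\times H^{-1}(\Omega)$) asserts: a priori there could exist data outside $F$ at which $\mathcal{J}$ is finite and smaller, since $F$ may be a proper subspace. To close this you must explain why the infimum over $F$ equals the infimum over the whole space --- for instance by adopting the convention that the quadratic term equals $+\infty$ whenever $\partial_\nu\phi\notin L^2(0,T;L^2(\Gamma_0))$ and arguing that any datum with square-integrable trace already belongs to $F$ (i.e.\ that the effective domain of $\mathcal{J}$ is contained in $F$), or else by honestly restating the theorem as minimization over $F$, which is all that the subsequent construction of the control $g=-\frac{\partial\tilde{\phi}}{\partial\nu}$ actually requires. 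As written, membership in $L^2(\Omega)\times H^{-1}(\Omega)$ is being conflated with minimality over $L^2(\Omega)\times H^{-1}(\Omega)$.
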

\vskip 0.5cm
\begin{proof} Denote by
$$E_{-1}=L^2(\Omega)\times
H^{-1}(\Omega),\qquad E_0=H^1_0(\Omega)\times L^2(\Omega).$$
 Continuity and convexity are easy to prove.
The existence of minimum of $\mathcal{J}$ is ensured is also
coercive, that's
$$\displaystyle{\lim _{\left\|(\phi^0,\phi^1)\right\|_{E_{-1}}\longrightarrow+\infty}}\,
\mathcal{J}(\phi^0,\phi^1)=+\infty.$$ In fact, coercivity of
$\mathcal{J}$ follows from (\ref{woi}), indeed
\begin{equation*}
\begin{split}
&\mathcal{J}(\phi^0,\phi^1)\geq
\frac{1}{2}\left(\int_0^T\int_{\Gamma_0}\left|\frac{\partial
\phi}{\partial\nu}\right|^2d\Gamma_0dt-\left\|(z^0,z^1)\right\|_{E_{-1}}\left\|(\phi^0,\phi^1)\right\|_{E_0}\right)\\
&\geq\frac{C}{2}\left\|(\phi^0,\phi^1)\right\|^2_{E_{-1}}-\frac{1}{2}\left\|(z^0,z^1)\right\|_{E_{-1} }\left\|(\phi^0,\phi^1)\right\|_{E_0}.\\
\end{split}
\end{equation*}
Hence, we conclude from the previous theorem that $\mathcal{J}$ has
a minimizer $(\tilde{\phi}^0,\tilde{\phi}^1)\in E_{-1}$.  Next we
prove that $\mathcal{J}$ is strictly  convex and hence the minimizer
is unique. In fact, let $(\phi^0,\phi^1),(\psi^0,\psi^1)\in E_{-1}$
and $\lambda\in [0,1]$. We have
\begin{equation*}
\begin{split}
&\mathcal{J}(\lambda(\phi^0,\phi^1)+(1-\lambda)(\psi^0,\psi^1))=\\
&\lambda\mathcal{J}(\phi^0,\phi^1)+(1-\lambda)\mathcal{J}(\psi^0,\psi^1)-\frac{\lambda(1-\lambda)}{2}\int_0^T\int_{\Gamma_0}\left|\frac{\partial
\phi}{\partial\nu}-\frac{\partial\psi}{\partial\nu}\right|^2d\Gamma_0dt.
\end{split}
\end{equation*}
From (\ref{woi}) we deduce that
$$\int_0^T\int_{\Gamma_0}\left|\frac{\partial
\phi}{\partial\nu}-\frac{\partial\psi}{\partial\nu}\right|^2d\Gamma_0dt\geq
\left\|(\phi^0,\phi^1)-(\psi^0,\psi^1)\right\|_{E_{-1}},$$ and
hence, for any $(\phi^0,\phi^1)\neq (\psi^0,\psi^1)$,
$$\mathcal{J}(\lambda(\phi^0,\phi^1)+(1-\lambda)(\psi^0,\psi^1))<\lambda\mathcal{J}(\phi^0,\phi^1)+(1-\lambda)\mathcal{J}(\psi^0,\psi^1).$$
\end{proof}
\begin{remark}{\rm

 According to the previous theorem and under hypothesis
(\ref{woi}), system (\ref{weee1})-(\ref{weee4}) is controllable, and
this control may be obtained as in (\ref{vert}) from the solution of
the homogenous system (\ref{wee1})-(\ref{wee3}) with initial data
minimizing the functional $\mathcal{J}$. }
\end{remark}
\subsection{Abstract framework}
 Let $H$ be a Hilbert space, and let
$A_1:\mathcal{D}(A_1)\longrightarrow H$ be a self-adjoint, positive,
and boundedly invertible operator. We introduce the scale of Hilbert
spaces $H_\alpha,\;\alpha\in\mathbb{R}$, as follows: for every
$\alpha\geq 0,\;H_\alpha=\mathcal{D}(A^{\alpha}_1)$, with the norm
$\|z\|_{\alpha}=\|A^{\alpha}_1z\|_H$. The space $H_{-\alpha}$ is
defined by duality with respect to the pivot space $H$ as follows:
$H_{-\alpha} = H^*_\alpha $ for $ \alpha > 0$. Equivalently,
$H_{-\alpha}$ is the completion of $H$  with respect to the norm
$\|z\|_{-\alpha}=\|A^{-\alpha}_1z\|_H$. The operator $A_1$ can be
extended (or restricted) to each $H_\alpha$ such that it becomes a
bounded operator
$$A_1: H_\alpha\longrightarrow H_{\alpha-1}\quad
\forall\,\alpha\in\mathbb{R}.$$
 Let $B_1$ be an  unbounded linear operator
from $U$ to $H_{-\frac{1}{2}}$, where $U$ is another Hilbert space.
We identify $U$ with its dual, so that $U =U^*$. The systems we
consider are described by
\begin{equation}\label{seo1}
\ddot{z}(t)+A_1z(t)=B_1 v(t),
\end{equation}
\begin{equation}\label{seo2}
z(0)=z^0,\quad \dot{z}(0)=z^1.
\end{equation}
where $t\in [0,+\infty)$ is the time and  $v\in L^2([0,+\infty);U)$.
The equation (\ref{seo1}) is understood as an equation in
$H_{-\frac{1}{2}}$, i.e., all the terms are in $H_{-\frac{1}{2}}$.\\
Let us now  consider the initial value problem
\begin{equation}\label{fre1}
\ddot{\phi}(t)+A_1\phi(t)=0,
\end{equation}
\begin{equation}\label{fre2}
\phi(0)=z^0,\quad \dot{\phi}(0)=z^1,
\end{equation}
It's well known that (\ref{fre1})-(\ref{fre2}) is well-posed in
$\mathcal{D}(A_1)\times\mathcal{D}(A^{\frac{1}{2}}_1)$ and in
$\mathcal{D}(A^{\frac{1}{2}}_1)\times H$. System
(\ref{seo1})-(\ref{seo2}) is well-posed, in fact
\begin{prp}
Suppose that $v\in L^2([0,T];U)$ and that the solutions  $\phi$ of
(\ref{fre1})-(\ref{fre2}) are such that $B^*_1\phi(.)\in
H^1([0,T];U)$ and there exists a constant $C>0$ such that
$$\|(B^*_1\phi)'(.)\|_{L^2(0,T;U)}\leq
C\|(z^0,z^1)\|_{H_{\frac{1}{2}}\times H},\qquad\forall (z^0,z^1)\in
H_{\frac{1}{2}}\times H.$$ Then the system (\ref{seo1})-(\ref{seo2})
admits a unique solution having the regularity
$$z\in C([0,T];H_{\frac{1}{2}})\cap C^1([0,T];H).$$
\end{prp}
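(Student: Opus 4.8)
The plan is to recast the second-order system as a first-order Cauchy problem on the energy space $X=H_{\frac{1}{2}}\times H$ and to read the hypothesis as the admissibility of an observation operator, from which admissibility of the control operator — and hence the asserted regularity — follows by duality. First I would introduce the generator $\cA=\begin{pmatrix}0 & I\\ -A_1 & 0\end{pmatrix}$ with domain $\mathcal{D}(\cA)=H_1\times H_{\frac{1}{2}}$, and equip $X$ with the energy inner product $\langle (f_1,g_1),(f_2,g_2)\rangle_X=\langle A_1^{\frac{1}{2}}f_1,A_1^{\frac{1}{2}}f_2\rangle_H+\langle g_1,g_2\rangle_H$. A short computation shows $\cA$ is skew-adjoint, so by Stone's theorem it generates a unitary $C_0$-group $(\cS(t))_{t\in\mathbb{R}}$ on $X$; writing $w_0=(z^0,z^1)$, the free solution of (\ref{fre1})-(\ref{fre2}) is $(\phi,\dot\phi)=\cS(\cdot)w_0$. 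Setting $\cB=\begin{pmatrix}0\\ B_1\end{pmatrix}$, bounded from $U$ into the extrapolation space $X_{-\frac{1}{2}}=H\times H_{-\frac{1}{2}}$, the system (\ref{seo1})-(\ref{seo2}) becomes $\dot w=\cA w+\cB v$, $w(0)=w_0$, with mild solution given by the Duhamel formula $w(t)=\cS(t)w_0+\int_0^t\cS(t-s)\cB v(s)\,ds$. The entire difficulty is that $\cB v(s)$ lives only in $X_{-\frac{1}{2}}$, so a priori the integral defines an element of $X_{-\frac{1}{2}}$, not of $X$; proving that it in fact takes values in $X$ continuously is exactly the point.

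Next I would identify the adjoint control operator. Pairing $X_{-\frac{1}{2}}$ with $\mathcal{D}(\cA)$ through the extension of the energy inner product, and using the defining relation of $B_1^*$ in the $H_{-\frac{1}{2}}$–$H_{\frac{1}{2}}$ duality, one finds $\cB^*(f,g)=B_1^*g$, so that $\cB^*\cS(t)w_0=B_1^*\dot\phi(t)=(B_1^*\phi)'(t)$. The hypothesis therefore reads precisely
$$\|\cB^*\cS(\cdot)w_0\|_{L^2(0,T;U)}\le C\,\|w_0\|_X,\qquad\forall\,w_0\in X,$$
i.e. $\cB^*$ is an admissible observation operator for $(\cS(t))$ on $[0,T]$. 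Since $(\cS(t))$ is unitary, $\cS(t)^*=\cS(-t)$, and writing $\cS(-\tau)=\cS(T-\tau)\cS(-T)$ with $\|\cS(-T)w_0\|_X=\|w_0\|_X$, a change of variables upgrades this to the backward estimate $\|\cB^*\cS(-\cdot)w_0\|_{L^2(0,T;U)}\le C\|w_0\|_X$.

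The key step is then the duality argument showing that the input map $\Phi_T v=\int_0^T\cS(T-s)\cB v(s)\,ds$ is bounded from $L^2(0,T;U)$ into $X$. For $w_0$ in the dense subspace $\mathcal{D}(\cA)=\mathcal{D}(\cA^*)$, moving $\cS(T-s)^*=\cS(-(T-s))$ onto the test vector gives
$$\langle\Phi_T v,w_0\rangle_X=\int_0^T\langle v(s),\cB^*\cS(-(T-s))w_0\rangle_U\,ds,$$
whence, by Cauchy–Schwarz and the backward admissibility estimate, $|\langle\Phi_T v,w_0\rangle_X|\le C\,\|v\|_{L^2(0,T;U)}\|w_0\|_X$. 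Taking the supremum over $\|w_0\|_X\le1$ yields $\Phi_T v\in X$ with $\|\Phi_T v\|_X\le C\|v\|_{L^2(0,T;U)}$; applying the same on each $[0,t]$ shows $w(t)\in X$ with a uniform bound, and a standard density-plus-uniform-boundedness argument promotes this to $w\in C([0,T];X)$. Reading off the two components gives $z\in C([0,T];H_{\frac{1}{2}})$ and $\dot z\in C([0,T];H)$, i.e. the claimed regularity.

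Uniqueness is then immediate: the difference of two solutions in this class solves (\ref{fre1})-(\ref{fre2}) with zero data, which admits only the trivial solution by well-posedness of the free system. I expect the genuine obstacle to be the duality transfer in the third paragraph — verifying the pairing identity for $\Phi_T$ and deducing control admissibility from observation admissibility (including the passage to the backward estimate) — while the first-order reformulation, the skew-adjointness of $\cA$, and the computation of $\cB^*$ are routine.
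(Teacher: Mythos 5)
The paper itself offers no proof of this proposition: it is stated as imported background (it is the well-posedness hypothesis/result of Ammari--Tucsnak \cite{AT}; see also \cite{TW}), so there is no internal argument to compare yours against. Your proof is correct, and it is essentially the standard argument behind those references: the first-order reformulation on $X=H_{\frac12}\times H$, the observation that the hypothesis says exactly that $\cB^*$ is an admissible observation operator for the unitary group generated by the skew-adjoint $\cA$, the transfer by duality (using unitarity of the group to pass from the forward to the backward estimate) to admissibility of the control operator $\cB$, and finally the uniform bound on $\Phi_t v$ plus a density argument to get continuity in time; uniqueness reduces to the homogeneous problem. One notational slip worth fixing: the space $H\times H_{-\frac12}$ that you call $X_{-\frac12}$ is the extrapolation space $X_{-1}$ in the usual scale attached to $\cA$ (since $X_1=\mathcal{D}(\cA)=H_1\times H_{\frac12}$ and $X=H_{\frac12}\times H$, one full step down gives $H\times H_{-\frac12}$). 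This does not affect the argument --- all you need is that $\cB$ maps $U$ boundedly into this space, that the group extends to it, and that it is the dual of $\mathcal{D}(\cA^*)$ with respect to the pivot $X$ --- but the label should be corrected if you write this up.
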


 Next, we give the
definition of $\alpha$-weak observability of
(\ref{seo1})-(\ref{seo2}).
\begin{defi}
The system (\ref{fre1})-(\ref{fre2}) is $\alpha$-weakly observable
if there exist a time $T>0$ and a constant $C_T>0$ such that
\begin{equation}\label{ioff}
\int_0^T\|B^*_1\phi(t)\|^2_Udt\geq
C_T(\left\|z^0\right\|^2_{H_{-\alpha}}+\left\|z^1\right\|^2_{H_{-\alpha-\frac{1}{2}}}),\;\forall\,(z^0,z^1)\in
H_1\times H_{\frac{1}{2}},
\end{equation}
where $\alpha>-\frac{1}{2}$.\\
The system (\ref{fre1})-(\ref{fre2}) is $\alpha$-weakly observable
if it's $\alpha$-weakly  observable in some $T>0$.
\end{defi}
\begin{thm}\label{prpp}
 (\ref{ioff}) holds  if and only if  there exist a control $v\in L^2([0,T];U)$
satisfying
\begin{equation}\label{cout}
\left\|v\right\|^2_{L^2(0,T)}\leq
(\left\|z^0\right\|^2_{H_{\alpha+\frac{1}{2}}}+\left\|z^1\right\|^2_{H_\alpha}),\quad\forall
(z^0,z^1)\in H_1\times H_{\frac{1}{2}},
\end{equation}
such that the solution of (\ref{seo1})-(\ref{seo2}) satisfy
$$z\equiv 0,\quad t\geq T.$$
\end{thm}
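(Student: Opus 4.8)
The plan is to mirror the duality argument used for the concrete wave equation (Theorem~\ref{rp}) in the abstract scale-of-spaces setting. The key is to build an operator $\Lambda$ analogous to the one in the preceding Lemma, show it is an isomorphism between an appropriate pair of dual spaces using the Lax--Milgram theorem, and then read off both implications of the equivalence from the coercivity estimate. The main structural point is identifying the correct functional-analytic framework: the observability inequality \rfb{ioff} bounds the $H_{-\alpha}\times H_{-\alpha-\frac12}$ norm of the data from below by the observation $\int_0^T\|B_1^*\phi\|_U^2\,dt$, while the control estimate \rfb{cout} bounds the control norm in terms of the $H_{\alpha+\frac12}\times H_\alpha$ norm, and these two spaces are dual with respect to the pivot $H$ under the pairing induced by $A_1$.

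First I would set up the backward (adjoint) problem associated to \rfb{fre1}--\rfb{fre2} with homogeneous final data $\psi(T)=\dot\psi(T)=0$ and forcing $B_1 B_1^*\phi$, so that the solution $\psi$ of $\ddot\psi+A_1\psi=B_1B_1^*\phi$ defines $\Lambda(z^0,z^1)=(\dot\psi(0),-\psi(0))$. Multiplying the adjoint equation by $\phi$ and integrating by parts over $[0,T]$ — exactly as in the Lemma — yields the fundamental identity
\begin{equation*}
\langle \Lambda(z^0,z^1),(z^0,z^1)\rangle=\int_0^T\|B_1^*\phi(t)\|_U^2\,dt,
\end{equation*}
where the duality pairing is the $A_1$-induced pairing between $H_{\alpha+\frac12}\times H_\alpha$ and its dual $H_{-\alpha}\times H_{-\alpha-\frac12}$. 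Once this identity is in place, \rfb{ioff} is precisely the statement that the bilinear form $(z,\tilde z)\mapsto\langle\Lambda z,\tilde z\rangle$ is coercive, and the Lax--Milgram theorem then guarantees that $\Lambda$ is an isomorphism onto the dual space. This is the abstract analogue of the Lemma, and I would first establish it as the engine of the proof, taking care to complete the data space in the norm $\big(\int_0^T\|B_1^*\phi\|_U^2dt\big)^{1/2}$ and invoking the hypothesis \rfb{holmg}-type nondegeneracy (here guaranteed by the standing assumptions) to ensure this is genuinely a norm.

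For the forward implication (observability $\Rightarrow$ controllability), I would solve $\Lambda(z^0,z^1)=(z^1,-z^0)$ via the isomorphism, set $v=B_1^*\phi$ where $\phi$ solves the homogeneous problem with the resulting data, and verify that the controlled solution $z=\psi$ satisfies $z\equiv0$ for $t\ge T$ by construction of the final conditions; the cost estimate \rfb{cout} follows by combining the coercivity lower bound with the Cauchy--Schwarz estimate on $\langle\Lambda z,z\rangle$, exactly as the chain of inequalities leading to \rfb{azert} in the proof of Theorem~\ref{rp}. For the converse, the existence of a control satisfying \rfb{cout} that steers to zero forces $\Lambda^{-1}$ to be bounded, hence $\|\Lambda z\|\ge C\|z\|$, and the continuous embedding of the completed data space into $H_{-\alpha}\times H_{-\alpha-\frac12}$ returns \rfb{ioff}.

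**The hard part will be** verifying that all the integrations by parts are justified in the weak scale-space topology — in particular that $\psi(0)$ and $\dot\psi(0)$ are well-defined elements of the correct dual spaces $H_{-\alpha}$ and $H_{-\alpha-\frac12}$, and that the boundary term $B_1^*\phi\in H^1([0,T];U)$ regularity furnished by the preceding Proposition is exactly what is needed to give meaning to the forcing $B_1B_1^*\phi$ as an $H_{-\frac12}$-valued function and to the final-value problem. The index bookkeeping across the scale $H_\alpha$ (tracking how $A_1:H_\alpha\to H_{\alpha-1}$ shifts the regularity of $\phi$, $\psi$, and their traces through $B_1^*$) is where the abstract proof genuinely diverges from the concrete one, and is the step I would write out most carefully.
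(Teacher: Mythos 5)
Your proposal follows essentially the same route as the paper's proof: the completion $F$ of $H_{\frac{1}{2}}\times H$ under the norm $\bigl(\int_0^T\|B_1^*\phi(t)\|_U^2\,dt\bigr)^{1/2}$, the backward problem with forcing $B_1 v$, $v=B_1^*\phi$, defining $\Lambda(z^0,z^1)=(\dot\psi(0),-\psi(0))$, coercivity of $\Lambda$ read off from \rfb{ioff} and upgraded to an isomorphism by Lax--Milgram, the control obtained by solving $\Lambda(z^0,z^1)=(z^1,-z^0)$ with the cost estimate via Cauchy--Schwarz, and the converse via boundedness of $\Lambda^{-1}$ together with the embedding $F\subset H_{-\alpha}\times H_{-\alpha-\frac{1}{2}}$. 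This matches the paper's argument step for step (the paper is in fact terser than your plan, omitting the regularity bookkeeping you flag as the hard part), so no further comparison is needed.
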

The previous theorem allows us to introduce a new notion of
controllability said $\alpha$-weak controllability as follows.
\begin{defi} With $v$ as above in Theorem \ref{prpp}, system
(\ref{seo1})-(\ref{seo2}) is said to be $\alpha$-weakly controllable
in time $T>0$, i.e., $v$ verify (\ref{cout}) and derives the system
(\ref{seo1})-(\ref{seo2}) to zero in time $T>0$.
\end{defi}
\begin{proof}(of Theorem \ref{prpp}).
Let $F$ be the completion of $H_{\frac{1}{2}}\times H$ with respect
to the norm
$$\left\|(z^0,z^1)\right\|^2_F=\int_0^T\|B^*_1\phi(t)\|^2_Udt$$
where $\phi$ is a solution of (\ref{fre1})-(\ref{fre2}) and such
that $$ F\subset H_{-\alpha}\times H_{-\alpha-\frac{1}{2}}.$$ Since
(\ref{ioff}) holds true, then for all $T>0$ there exist $C_T>0$ such
that
$$\int_0^T\|\|B^*_1\phi(t)\|^2_Udt\geq
C_T(\|(z^0,z^1)\|^2_F),\qquad\forall\,(z^0,z^1)\in H_1\times
H_{\frac{1}{2}}.$$ Thus $F$ and $F'$ are in duality by
$$\left\langle(u^0,u^1),(v^0,v^1)\right\rangle_{F',F}=\int_{\Omega}u^0v^1-u^1v^0dx,\quad\forall\,(u^0,u^1)\in
F',\;(v^0,v^1)\in F.$$ and the backward problem  associated to
(\ref{fre1})-(\ref{fre2}) is
\begin{equation}
\ddot{\psi}(t)+A_1\psi(t)=B_1 v(t),
\end{equation}
\begin{equation}
\psi(T)=\dot{\psi}(T)=0,
\end{equation}

 Define the following operator
$\Lambda:F\longrightarrow F'$    by
$$\Lambda(u^0,u^1)=(\frac{\partial\psi}{\partial t}(x,0),-\psi(x,0)).$$ Hence we have
$$\left\langle \Lambda(u^0,u^1),(v^0,v^1)\right\rangle_{F',F}=\int_{\Omega}u^0v^1-u^1v^0dx,\quad\forall\,(u^0,u^1)\in
F,\;(v^0,v^1)\in F.$$ It's easy to check that $\Lambda$ is linear
and continuous operator, in particular
\begin{equation}\label{sou}
\left\langle
\Lambda(u^0,u^1),(u^0,u^1)\right\rangle_{F',F}=\int_0^T\|B^*_1\phi(t)\|^2_Udt\geq
C_T(\|(z^0,z^1)\|^2_F)
\end{equation}
 and hence $\Lambda$ is coercive, which imply by the Lax-Milgram lemma that $\Lambda$ is an isomorphism between $F$ and $F'$.\\
Let $v=B^*_1\phi$. One can easily verify that $v$ is in
$L^2([0,T];U)$(it's simply the closed-loop admissibility hypothesis
or the direct inequality) and for
\begin{equation*}
\left\{
\begin{array}{c}
\psi(0)=z^0\\
\dot{\psi}(0)=z^1
\end{array}\right.
\end{equation*} the solution of (\ref{seo1})-(\ref{seo2})
satisfy
$$z\equiv 0,\qquad\forall t\geq T,$$
and the control $v$ satisfy (\ref{cout}). \\
Conversely, from (\ref{sou}) and the embedding of $F$  in
$H_{-\alpha}\times H_{-\alpha-\frac{1}{2}}$, we easily get
(\ref{ioff}) and therefore system (\ref{fre1})-(\ref{fre2}) is
$\alpha$-weakly observable.
\end{proof}
\begin{corollary}
Suppose that system (\ref{fre1})-(\ref{fre2}) is $\alpha$-weakly
observable (that's equivalent to system (\ref{seo1})-(\ref{seo2}) is
$\alpha$-weakly controllable) and we consider the following
observation
\begin{equation}\label{feed}
y(t)=B^*z(t).
\end{equation}
If for fixed $\delta>0$, the    function defined by
$$H(\lambda)=\lambda B^*_1(\lambda^2I+A_1)^{-1}B_1\in\mathcal{L}(U),\quad\forall\,\lambda\in\mathbb{C}_0$$ is uniformly
bounded on
$\mathbb{C}_\delta=\{s\in\mathbb{C}|\,\mathrm{Re}\,s=\delta>0\}$,
  then system (\ref{fre1})-(\ref{fre2}) is weakly stable. See
  \cite{AT} for more details.

\end{corollary}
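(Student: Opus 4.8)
The plan is to realize the closed-loop system governed by the collocated feedback $v=-\dot y=-B_1^*\dot z$ as a first-order evolution equation and to read off weak stability from the spectral properties of its generator, using $\alpha$-weak observability to exclude imaginary eigenvalues and the boundedness of $H$ to control the resolvent. First I would rewrite the damped equation $\ddot z+A_1z+B_1B_1^*\dot z=0$ in first-order form $\dot w=\mathcal{A}w$ on the energy space $X=H_{\frac12}\times H$, with
$$\mathcal{A}=\begin{pmatrix}0&I\\-A_1&-B_1B_1^*\end{pmatrix}$$
on its natural domain. A direct computation gives $\mathrm{Re}\langle\mathcal{A}w,w\rangle_X=-\|B_1^*w_2\|_U^2\le 0$, so by the Lumer--Phillips theorem $\mathcal{A}$ generates a contraction semigroup $(T(t))_{t\ge0}$, the range condition being guaranteed by the bounded invertibility of $A_1$. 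Since $X$ is a Hilbert space, $(T(t))$ is bounded and, by reflexivity, weakly almost periodic; hence weak stability will follow once the reversible (almost periodic) part of the semigroup is shown to be trivial.

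Next I would prove $\sigma_p(\mathcal{A})\cap i\mathbb{R}=\emptyset$. If $\mathcal{A}(u,i\omega u)=i\omega(u,i\omega u)$ with $(u,i\omega u)\neq0$, then $A_1u-\omega^2u+i\omega B_1B_1^*u=0$; pairing with $u$ and taking imaginary parts yields $\omega\|B_1^*u\|_U^2=0$. For $\omega=0$ the invertibility of $A_1$ forces $u=0$, while for $\omega\neq0$ we obtain $B_1^*u=0$ and $A_1u=\omega^2u$, so that the free solution $\phi$ of (\ref{fre1})--(\ref{fre2}) issuing from $(u,i\omega u)$ satisfies $B_1^*\phi(t)\equiv0$; the left-hand side of the $\alpha$-weak observability inequality (\ref{ioff}) then vanishes and forces $u=0$. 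This is exactly the step that consumes the observability hypothesis, and the same computation applied to $\mathcal{A}^*$ (which differs only by the sign of the skew-adjoint part) gives $\sigma_p(\mathcal{A}^*)\cap i\mathbb{R}=\emptyset$. In particular the operator $P(i\omega):=A_1-\omega^2I+i\omega B_1B_1^*$ is injective for every real $\omega$.

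To locate the remainder of the spectrum I would compute the resolvent via the Schur complement: $i\omega\in\rho(\mathcal{A})$ is equivalent to the bounded invertibility of $P(i\omega)$, and the Woodbury push-through identity reduces this to the invertibility in $\mathcal{L}(U)$ of $I+H(i\omega)$, where $H(i\omega)=i\omega B_1^*(A_1-\omega^2I)^{-1}B_1$ is precisely the boundary value of the transfer function $H(\lambda)=\lambda B_1^*(\lambda^2I+A_1)^{-1}B_1$. The hypothesis that $H$ is uniformly bounded on $\mathbb{C}_\delta$, combined with its analyticity on $\mathbb{C}_0$ and the dissipativity, yields through a Phragm\'en--Lindel\"of argument pushing the bound from $\mathbb{C}_\delta$ toward the imaginary axis a uniform bound for $(I+H(i\omega))^{-1}$, hence the surjectivity and resolvent estimate that place $i\mathbb{R}\subset\rho(\mathcal{A})$. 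With the absence of imaginary point spectrum for both $\mathcal{A}$ and $\mathcal{A}^*$ and this resolvent control, the Arendt--Batty--Lyubich--Vu theorem (in the weak form governing the reversible part) shows that the almost periodic part of $(T(t))$ reduces to $\{0\}$, so that $T(t)w\rightharpoonup0$ for every $w\in X$; that is, (\ref{fre1})--(\ref{fre2}) is weakly stable.

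The main obstacle is the third step: transferring the quantitative information carried by the transfer-function bound on $\mathbb{C}_\delta$ to the imaginary axis and matching it against the weak observability inequality (\ref{ioff}), whose controlling norms are weaker by the order $\alpha$. It is exactly this gap---the observability holding only in the weaker spaces $H_{-\alpha}\times H_{-\alpha-\frac12}$ rather than in the energy space---that obstructs a uniform resolvent bound on $i\mathbb{R}$ and thereby confines the conclusion to weak, rather than strong or exponential, stability.
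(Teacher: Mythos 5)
The paper offers no proof of this corollary at all: it is stated as an import from \cite{AT}, so your attempt has to be judged against the standard argument in that literature rather than against anything in the text. Your first two steps are fine — dissipativity of the closed-loop operator $\mathcal{A}$, and the use of the $\alpha$-weak observability inequality (\ref{ioff}) to exclude imaginary eigenvalues of $\mathcal{A}$ and $\mathcal{A}^*$ (a weak norm on the left-hand side is still a norm, so it yields the needed unique continuation). The fatal problem is your third step. Boundedness of $H(\lambda)=\lambda B_1^*(\lambda^2I+A_1)^{-1}B_1$ on the line $\mathrm{Re}\,s=\delta$ extends (by Phragm\'en--Lindel\"of, since $H$ has polynomial growth) to the half-plane $\mathrm{Re}\,s\ge\delta$, but it gives no control whatsoever as $\mathrm{Re}\,\lambda\to 0^+$: you cannot run Phragm\'en--Lindel\"of on the strip $0<\mathrm{Re}\,\lambda<\delta$ because a bound on the boundary line $\mathrm{Re}\,\lambda=0$ is exactly what you are trying to produce. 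Worse, in the situations the corollary is designed for (and in the paper's own application of Section 4), $A_1$ has compact resolvent, so there are infinitely many real $\omega$ with $\omega^2\in\sigma(A_1)$; at those points the Schur/Woodbury reduction to $I+H(i\omega)$ is meaningless, and in fact observability forces $B_1^*$ not to annihilate the eigenvectors of $A_1$, so $H(\lambda)$ necessarily blows up as $\lambda\to i\omega$. So $i\mathbb{R}\subset\rho(\mathcal{A})$ is neither derivable from the hypotheses nor true in general; and had it been true, Arendt--Batty--Lyubich--Vu would give \emph{strong} stability, strictly more than the corollary claims — a sign the approach proves too much from too little. In \cite{AT} the transfer-function hypothesis plays an entirely different role: it guarantees well-posedness (admissibility of the feedback) for the closed-loop system, not any resolvent information on the imaginary axis.

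Your fallback framing is also flawed: the Jacobs--Glicksberg--de Leeuw splitting only says that when the almost periodic part is trivial every orbit has $0$ as a \emph{weak cluster point}; it does not give $T(t)w\rightharpoonup 0$. The mechanism that actually proves the corollary (Benchimol's theorem on weak stabilizability of contraction semigroups, which is what the theory in \cite{AT} rests on) is the Nagy--Foias--Langer/Foguel decomposition of a Hilbert-space contraction semigroup into a unitary part and a completely non-unitary part: the completely non-unitary part is always weakly stable (via the absolutely continuous minimal unitary dilation and the Riemann--Lebesgue lemma), so it suffices to show the unitary subspace $X_u$ is trivial. On $X_u$ the norm is conserved, so the energy identity forces $B_1^*$ of the orbit to vanish identically; for $w\in D(\mathcal{A})\cap X_u$ this puts the initial data in $H_1\times H_{\frac{1}{2}}$ and makes the orbit a solution of the conservative system (\ref{fre1})--(\ref{fre2}) with identically zero observation, whence (\ref{ioff}) gives $w=0$ and, by density, $X_u=\{0\}$. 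This replaces your (unavailable) resolvent analysis on $i\mathbb{R}$ by a purely structural decomposition, and it consumes the observability hypothesis in essentially the way your step 2 already does.
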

\section{Application}

We consider the following initial and boundary problem:
\begin{equation}\label{exd1}
\frac{\partial^2u}{\partial t^2}-\frac{\partial^2u}{\partial
x^2}=v(t)\delta_\xi,\quad(x,t)\in (0,1)\times(0,+\infty),
\end{equation}
\begin{equation}\label{exd2}
u(0,t)=u(1,t)=0,\qquad t\in (0,+\infty),
\end{equation}
\begin{equation}\label{exd3}
u(x,0)=u^0(x),\quad \frac{\partial u}{\partial t}(x,0)=u^1(x),\qquad
x\in (0,1)
\end{equation}
where $\xi\in \mathcal{S}$\footnote{ We  denote by $\mathcal{S}$ the
set of all numbers $\rho\in (0, 1)$  such that $\rho\in\mathcal{Q}$
and if $[0, a_1,...,a_n,...]$  is the expansion of $\rho$ as a
continued fraction, then $(a_n)$ is bounded. Let us notice that
$\mathcal{S}$
 is obviously uncountable and, by classical results on diophantine
approximation (cf. [7], p. 120), its Lebesgue measure is equal to
zero.}. $\delta_\xi$ is the Dirac mass concentrated in the point
$\xi\in (0,1)$. Let
$$H=L^2(0,1),\quad U=\mathbb{R},\quad H_{\frac{1}{2}}=H^1_0(0,1),$$
and
$$A=-\frac{d^2}{dx^2},\quad H_1=H^2(0,1)\cap
H^1_0(0,1),\quad Bk=k\delta_\xi,\;\forall k\in\mathbb{R}.$$ The
homogenous problem associated to (\ref{exd1})-(\ref{exd3}) is
\begin{equation}\label{exd11}
\frac{\partial^2\phi}{\partial t^2}-\frac{\partial^2\phi}{\partial
x^2}=0,\quad (0,1)\times(0,+\infty),
\end{equation}
\begin{equation}\label{exd12}
\phi(0,t)=\phi(1,t)=0,\qquad  (0,+\infty),
\end{equation}
\begin{equation}\label{exd13}
\phi(x,0)=u^0(x),\quad \frac{\partial \phi}{\partial
t}(x,0)=u^1(x),\qquad x\in (0,1)
\end{equation}
{\rm According to Proposition $5.4$ of Ammari-Tucsnak \cite{AT}, see
also \cite{KHT},  the observability inequality concerning the trace
at the point $x=\xi$ of the solutions of (\ref{exd11})-(\ref{exd13})
reads as : For all $T>0$ and $\xi\in\mathcal{S}$,  there exists a
constant $C_{\xi,T}>$ such that
\begin{equation}\label{ih1}
\int_0^T\phi^2(\xi,t)dt\geq C_{\xi,T}
(\|u^0\|^2_{H^{-1}(0,1)}+\|u^1\|^2_{(H^2(0,1)\cap H^1_0(0,1))'}),
\qquad\forall (u^0,u^1)\in H_{\frac{1}{2}}\times H.
\end{equation}

 If we consider  $F$ as  the completion
of $H_{\frac{1}{2}}\times H$ with respect to the norm
$$\|(u^0,u^1)\|^2_F=\int_0^T\phi^2(\xi,t)dt.$$ and such $$F\subset H^{-1}(0,1)\times
(H^2(0,1)\cap H^1_0(0,1))'.$$ If we put
\begin{equation}\label{btt}
u^0(x)=\sum_{n\geq 1}a_n\sin(n\pi x),\;u^1(x)=\sum_{n\geq
1}b_n\sin(n\pi x),
\end{equation}
 with
$$(na_n),\quad(b_n)\subset l^2(\mathbb{R}),$$
then  the dual space of $F$ with respect to the pivot space
$L^2(0,1)$ can be characterized  by
$$ (u^0,u^1)\in F'\,\Longleftrightarrow\,\sum_{n\geq
1}\frac{n^2a^2_n+b^2_n}{\sin^2(n\pi\xi)}<\infty,$$ with $u^0,\,u^1$
as in (\ref{btt}). Therefore, inequality (\ref{ih1}) and  Theorem
\ref{prpp} gives the following corollary.}
\begin{corollary}
For a given time $T>0$ and $\xi\in\mathcal{S}$, there exists a
control $v\in L^2([0,T];\mathbb{R})$ such that
$$\left\|v\right\|^2_{L^2([0,T];\mathbb{R})}\leq
C(\|u^0\|^2_{(H^2(0,1)\cap H^1_0(0,1))}+\|u^1\|^2_{H^1_0(0,1)}),$$
and such that the solution of  (\ref{exd1})-(\ref{exd3}) satisfy
$$u\equiv 0,\quad\forall\,t\geq T.$$
\end{corollary}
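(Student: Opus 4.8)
The plan is to recognize this corollary as the direct specialization of Theorem \ref{prpp} to the concrete point-control system (\ref{exd1})--(\ref{exd3}), with the abstract data identified as $H=L^2(0,1)$, $A_1=-d^2/dx^2$ on $\mathcal{D}(A_1)=H^2(0,1)\cap H^1_0(0,1)$, and $B_1k=k\delta_\xi$. First I would check that the adjoint control operator is point evaluation, $B^*_1\phi=\phi(\xi)$: for $k\in\mathbb{R}$ and a smooth state $\phi$ one has $\langle B_1k,\phi\rangle=k\langle\delta_\xi,\phi\rangle=k\phi(\xi)=\langle k,B^*_1\phi\rangle_U$, so that $\|B^*_1\phi(t)\|^2_U=\phi^2(\xi,t)$ and the integrand in the abstract inequality (\ref{ioff}) coincides with the one in the application inequality (\ref{ih1}).

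Next I would pin down the scale of Hilbert spaces generated by $A_1$. Since $A_1$ is the Dirichlet Laplacian, $H_{1/2}=\mathcal{D}(A_1^{1/2})=H^1_0(0,1)$ and $H_1=\mathcal{D}(A_1)=H^2(0,1)\cap H^1_0(0,1)$, while by duality $H_{-1/2}=H^{-1}(0,1)$ and $H_{-1}=(H^2(0,1)\cap H^1_0(0,1))'$. Comparing the right-hand side of (\ref{ih1}), namely $\|u^0\|^2_{H^{-1}}+\|u^1\|^2_{(H^2\cap H^1_0)'}=\|u^0\|^2_{H_{-1/2}}+\|u^1\|^2_{H_{-1}}$, with the generic term $\|z^0\|^2_{H_{-\alpha}}+\|z^1\|^2_{H_{-\alpha-1/2}}$ of (\ref{ioff}) forces $\alpha=\tfrac12$ (both $-\alpha=-\tfrac12$ and $-\alpha-\tfrac12=-1$ give the same value). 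Hence the Ammari--Tucsnak inequality (\ref{ih1}), valid for every $T>0$ and every $\xi\in\mathcal{S}$ (the diophantine condition keeping $|\sin(n\pi\xi)|$ comparable to $1/n$), is exactly the assertion that the homogeneous system (\ref{exd11})--(\ref{exd13}) is $\tfrac12$-weakly observable in the sense of (\ref{ioff}).

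With $\alpha=\tfrac12$ in hand I would invoke Theorem \ref{prpp} directly: the validity of (\ref{ioff}) produces a control $v\in L^2([0,T];\mathbb{R})$ driving the solution of (\ref{exd1})--(\ref{exd3}) to rest, $u\equiv 0$ for $t\geq T$, and satisfying the cost estimate (\ref{cout}). It then remains only to read off that estimate with $\alpha=\tfrac12$: since $H_{\alpha+1/2}=H_1=H^2(0,1)\cap H^1_0(0,1)$ and $H_\alpha=H_{1/2}=H^1_0(0,1)$, inequality (\ref{cout}) becomes precisely $\|v\|^2_{L^2([0,T];\mathbb{R})}\leq C(\|u^0\|^2_{H^2\cap H^1_0}+\|u^1\|^2_{H^1_0})$, which is the claimed bound.

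The only genuine content beyond bookkeeping is the identification $\alpha=\tfrac12$ together with the matching of the dual norms, so the main obstacle is conceptual rather than computational: one must be certain that the completion $F$ and its dual $F'$ entering the proof of Theorem \ref{prpp} are the same spaces that appear in (\ref{ih1}). This is confirmed by the explicit Fourier characterization $(u^0,u^1)\in F'\iff\sum_{n\geq1}(n^2a^2_n+b^2_n)/\sin^2(n\pi\xi)<\infty$ recorded above, which exhibits the embedding $F\subset H_{-1/2}\times H_{-1}$ and thereby legitimizes the continuous imbedding used at the end of Theorem \ref{prpp} to pass from coercivity of $\Lambda$ back to the weak observability inequality. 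Everything else---linearity and boundedness of $\Lambda$, its coercivity via Lax--Milgram, and the admissibility of $v=B^*_1\phi$---is supplied verbatim by the abstract argument, so no further work is required.
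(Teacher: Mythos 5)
Your proposal is correct and follows exactly the paper's route: the paper likewise obtains the corollary by reading the Ammari--Tucsnak inequality (\ref{ih1}) as the $\alpha$-weak observability condition (\ref{ioff}) for the Dirichlet Laplacian with $B_1k=k\delta_\xi$ and then invoking Theorem \ref{prpp}. The only difference is that you make explicit the bookkeeping the paper leaves implicit (the identification $B^*_1\phi=\phi(\xi)$, the scale $H_{1/2}=H^1_0$, $H_1=H^2\cap H^1_0$, $H_{-1/2}=H^{-1}$, $H_{-1}=(H^2\cap H^1_0)'$, and the value $\alpha=\tfrac12$ that turns (\ref{cout}) into the stated cost bound), which is a welcome clarification rather than a departure.
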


\section{Comments}

More generally, the problem of observability refers  to dominate the
solution defined in $\Omega$ of some  Pde's  to the restriction on
portion of the boundary   in appropriate norms. For a large class of
Pde's such estimates are false  without constraints on the Cauchy
data or without geometric hypothesis. F. John \cite{FJ} introduced
estimates of  Hölder and logarithmic dependency type that reads in
our model case for  wave equation with control acting
in a portion of the boundary in the following way:\\
 The logarithmic dependency type is the existence of a constant $C>0$ such that
 for all $(u^0,u^1)\neq 0$,  we have
\begin{equation}\label{lio}
 \|(u^0,u^1)\|_{H^1_0(\Omega)\times L^2(\Omega)}\leq
 \exp\left(C\frac{\|(u^0,u^1)\|_{H^1_0(\Omega)\times L^2(\Omega)}}{\|(u^0,u^1)\|_{ L^2(\Omega)\times
 H^{-1}(\Omega)}}\right)^{\frac{1}{\beta}}\left\|\frac{\partial
 u}{\partial\nu}\right\|_{L^2(]0,T[\times\Gamma_0)}
 \end{equation}
 where $\beta\in (0,1)$.
These estimates can be viewed as  observability inequalities  with
low frequency where  the quantity $$
\frac{\|(u^0,u^1)\|_{H^1_0(\Omega)\times
L^2(\Omega)}}{\|(u^0,u^1)\|_{ L^2(\Omega)\times
 H^{-1}(\Omega)}}$$ is a natural measure of the
frequency of the wave. \\
By the same way, we can study the controllability concept associated
to the weakly observability inequality (\ref{lio}).
\subsection{Appendix}
\begin{proof}(of Proposition \ref{wec})\cite{AA}.
The solution of (\ref{wee1})-(\ref{wee3}) is explicitly given by:
\begin{equation}\label{exp}
\phi(x,t)=\sum_{k\in(\mathbb{N}^*)^2}(\alpha_ke^{i\omega_kt}+\alpha_{-k}e^{-i\omega_kt})\sin(k_1\pi
x_1)\sin(k_2\pi x_2).
\end{equation}
with suitable coefficients $\alpha_k$, and where
$k=(k_1,k_2),\,\omega_k=\pi\sqrt{k^2_1+k^2_2}$. \\
Now by using \cite[Theorem 1]{MM}, we first express the inequality
(\ref{woi}) in terms of the Fourier series. We have
\begin{equation*}
\begin{split}
&\int_0^T\int_{\Gamma_0}|\partial_\nu\phi(x,t)|^2d\Gamma_0(x)dt=\int_0^T\int_0^1|\partial_{x_1}\phi(0,x_2,t)|^2dx_2dt=\\
&\int_0^T\int_0^1\left|\sum_{k\in(\mathbb{N}^*)^2}k_1\pi(\alpha_ke^{i\omega_kt}+\alpha_{-k}e^{-i\omega_kt})\sin(k_2\pi
x_2)\right|^2dx_2dt.\\
\end{split}
\end{equation*}
By using the orthogonality of the family $(\sin(k_2\pi
x_2))_{k\in\mathbb{N}^*}$ in $L^2(0,1)$, we get
$$\int_0^T\int_{\Gamma_0}|\partial_\nu\phi(x,t)|^2d\Gamma_0(x)dt\asymp\sum_{k_2\in\mathbb{N}^*}\int_0^T\left|\sum_{k_1\in\mathbb{N}^*}k_1(\alpha_ke^{i\omega_kt}+\alpha_{-k}e^{-i\omega_kt})\right|^2dt.$$
On the other hand,
$$\sum_{k\in(\mathbb{N}^*)^2}k^2_1(|\alpha_k|^2+|\alpha_{-k}|^2)\geq
\sum_{k\in(\mathbb{N}^*)^2}(|\alpha_k|^2+|\alpha_{-k}|^2)=\|(\phi^0,\phi^1)\|^2_{L^2(\Omega)\times
H^{-1}(\Omega)}.$$Then, we apply again \cite[Theorem 1]{MM}, we take
$$d=N=2,\,\lambda_k=\omega_k=\pi\sqrt{k^2_1+k^2_2},\,\forall
k\in(\mathbb{N}^*)^2,\,p_l=l,\,\forall
l\in\mathbb{N}^*,\,\gamma_1=\gamma_2=\frac{\pi}{2\sqrt{2}}.$$ We
finally get (\ref{woi}) with $T>8$, ie., for $T_0=8$.
\end{proof}


\begin{thebibliography}{10}

\bibitem{A}{K. Ammari}, Dirichlet boundary stabilization of the wave equation, {\em Asymptot. Anal}., 30 (2002)  117-130.
\bibitem{AT}{K. Ammari and M. Tucsnak},  Stabilization of second
order evolution equations by a class of unbounded feedbacks, {\em
ESAIM: Control, Optimisation and Calculus of Variations}., {\bf
6}(2001), 361-386.
\bibitem{AA}{K. Ammari}, Personal communication.
\bibitem{KHT}{K. Ammari, A. Henrot and M. Tucsnak}, Asymptotic behaviour of the solutions and optimal location of
the actuator for the pointwise stabilization of a string, {\em
Asympt. Analysis}., {\bf28}(2001), 215-240.
\bibitem{BLR}{C. Bardos, G. Lebeau, and J. Rauch}, Sharp sufficient conditions for
the observation, control and stabilization of waves from the
boundary, {\em SIAM J. Control. and Optim}., {\bf 30} (1992)
1024-1065.
\bibitem{BU}{N. Burq, P. Gérard}, Condition nécessaire et suffisante pour la
contrôlabilité exacte des ondes, C. R. Acad. Sci. Paris Sér. I
Math., {\bf325}(1997), 749-752.
185-220.
\bibitem{ET}{Y. Ekeland and R. Temam}, {\em Analyse convexe et problèmes variationnelles}, Dunod, Gauthier-
Villars, 1974.
\bibitem{HO}{L. Hörmander}, The analysis of linear partial differential operators I, Classics in Mathematics,
Springer-Verlag, Berlin, 2003.
\bibitem{FJ}{F. John}, Continuous dependence on data for solutions
of partial differential equations with prescribed bound, {\em comm.
Pure Appl. Math}, {\bf 13}(1960)551-585.
\bibitem{VK}{V. Komornik}, {\em Exact controllability and stabilization, The multiplier
method}, RAM: Research in Applied Mathematics, Wiley, Masson, Paris,
Chichester, 1994.
\bibitem{LLT}{I. Lasiecka, J.-L. Lions and R. Triggiani}, Nonhomogeneous boundary value problems for second-order hyperbolic
generators,{\em  J. Math. Pures Appl.} {\bf65} (1986) 92-149.
\bibitem{Lions}{J.-L. Lions}, {\em Contr\^{o}labilit\'e exacte, perturbations et stabilisation de syst\`emes
distribu\'es}. {\bf Tome 1, vol. 8} of Recherches en Math\'ematiques
Appliqu\'ees, Masson, Paris, 1988.
\bibitem{GL}{G. Lebeau}, {\ Contrôle et stabilisation
hyperboliques}, Séminaire E.D.P. Ecole Polytechnique, 89-90.
\bibitem{MM}{M. Mehrenberger}, {\em An Ingham type proof for the
boundary observability of a N-d wave equation}, C. R. Math. Acad.
Sci. Paris, {\bf347} (2009), 63-68.
\bibitem{R}{L. Robbiano}, Fonction de coût et contrôle des solutions des équations hyperboliques, {\em Asymptotic Anal.}, {\bf10} (1995), 95-115.
\bibitem{TW}{M. Tucsnak and G. Weiss}, {\em Observation and control
for operators semigroups}, Birkh\"{a}user. 2009.




\end{thebibliography}
\end{document}